\theoremstyle{theorem}
\newtheorem{theorem}{Theorem}
\newtheorem{corollary}[theorem]{Corollary}
\newtheorem{lemma}[theorem]{Lemma}
\newtheorem{proposition}[theorem]{Proposition}
\newtheorem{conjecture}[theorem]{Conjecture}
\theoremstyle{definition}
\newcommand{\CC}{\mathbb{C}}
\newcommand{\RR}{\mathbb{R}}
\newcommand{\NN}{\mathbb{N}}
\DeclareMathOperator{\spn}{span}
\let\Re\undefined
\DeclareMathOperator{\Re}{Re}
\begin{document}

\title{A Positivity Conjecture Related to the Riemann Zeta Function}
\markright{A Positivity Conjecture}
\author{Hugues Bellemare, Yves Langlois and Thomas Ransford}

\maketitle

\begin{abstract}
According to two remarkable theorems of Nyman and  B\'aez-Duarte, 
the Riemann hypothesis is equivalent to a simply-stated criterion 
concerning least-squares approximation.
In carrying out computations related to this criterion, we have observed 
a curious phenomenon: 
for no apparent reason, at least the first billion entries of a certain infinite triangular matrix
 associated to the Riemann zeta function are all positive. 
In this article we describe the background leading to this observation, and 
make a conjecture. 
\end{abstract}

\section{Introduction.}\label{S:intro}

For each integer $k\ge2$, define $f_k:(0,1]\to\RR$ by
\begin{equation}
f_k(x):= \frac{1}{k}\Bigl[\frac{1}{x}\Bigr]-\Bigl[\frac{1}{kx}\Bigr]\qquad(0<x\le 1).
\end{equation}
Here $[t]$ denotes the integer part of $t$. Notice that $f_k$ is constant on each interval of the form
$(\frac{1}{j+1},\frac{1}{j}]$ and that, in this interval, $f_k(x)=\{j/k\}$, where $\{t\}:=t-[t]$ denotes the fractional
part of $t$. In particular, we have $0\le f_k(x)<1$ for all $k,x$.
The graph of  $f_5$ is displayed in Figure~\ref{F:f5}.

\begin{figure}[ht]
\begin{center}
\includegraphics[width=0.45\textwidth]{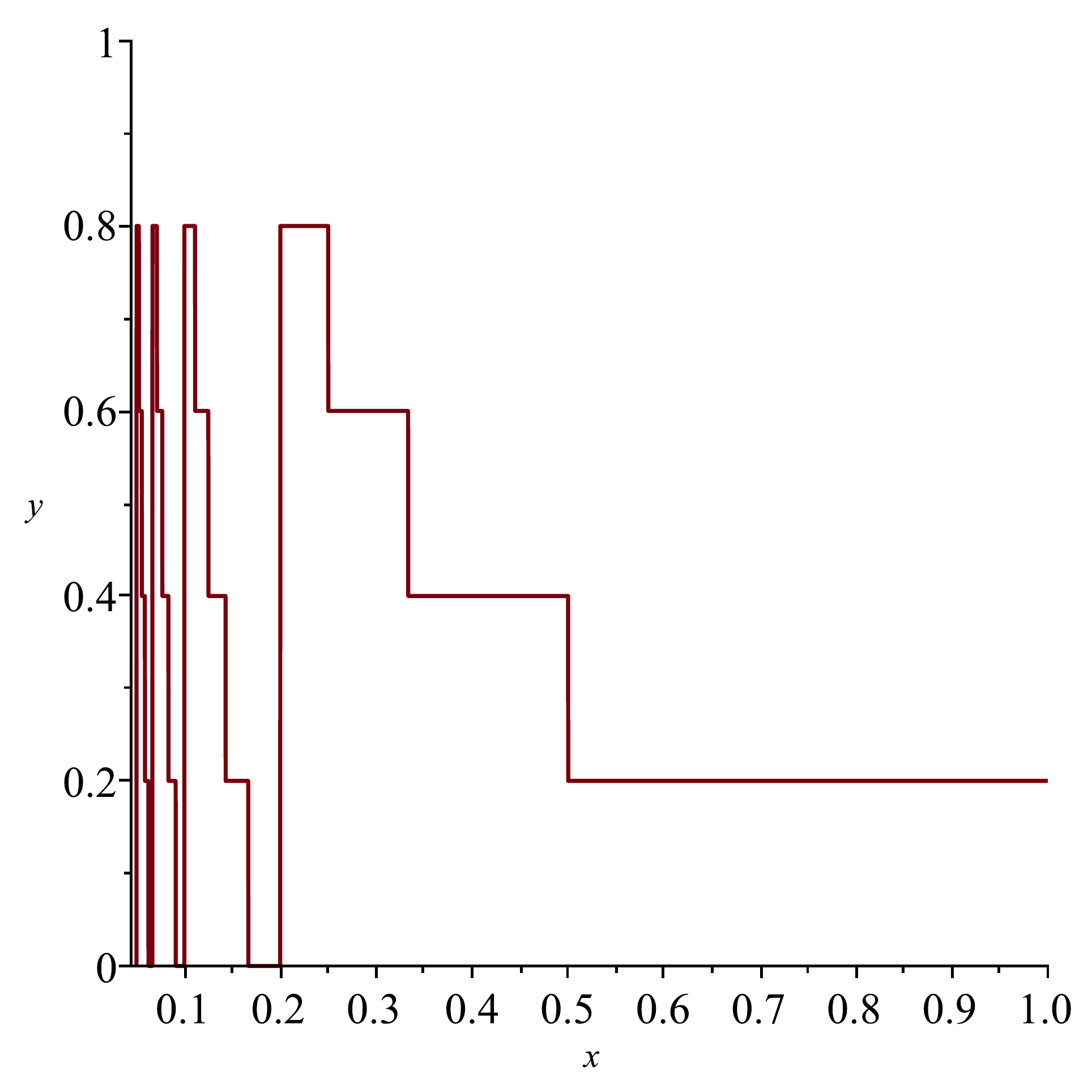}
\end{center}
\caption{Graph of $f_5(x)$.}\label{F:f5}
\end{figure}

For each integer $n\ge2$, define $d_n$ by
\begin{equation}\label{E:dn}
d_n^2:=\min\int_0^1\Bigl(1-\sum_{k=2}^n\lambda_kf_k(x)\Bigr)^2\,dx,
\end{equation}
where the minimum is taken over all real scalars $\lambda_2,\dots,\lambda_n$.
Thus $d_n$ is the distance between the constant function $1$ and the span of $\{f_2,\dots,f_n\}$,
as measured in the space $L^2(0,1)$
with the standard (real) inner product and norm
\[
\langle g,h\rangle:=\int_0^1g(x)h(x)\,dx
\quad\text{and}\quad
\|g\|:=\langle g,g\rangle^{1/2}.
\]
Does $d_n\to0$ as $n\to\infty$? This apparently anodyne question takes on a
new significance
in view of the following remarkable result.

\begin{theorem}[Nyman \cite{Ny50}, B\'aez-Duarte \cite{BD03}]\label{T:NBD}
We have $\lim_{n\to\infty}d_n=0$  if and only if the Riemann hypothesis is true.
\end{theorem}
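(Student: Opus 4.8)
The plan is to transport the problem, via the Mellin transform, into a concrete approximation problem in a Hardy space, and then to carry out the Nyman--Beurling argument. \emph{Step 1: reduction to a Hardy space.} Substituting $x=e^{-u}$ and invoking the Paley--Wiener theorem, the Mellin transform
\[
g\longmapsto\widehat{g}(s):=\int_0^1 g(x)\,x^{s-1}\,dx
\]
is an isometric isomorphism of $L^2(0,1)$ onto the Hardy space $H^2$ of the half-plane $\Re s>\tfrac12$ (up to the irrelevant constant $\sqrt{2\pi}$). One checks immediately that $\widehat{1}(s)=1/s$, and the one genuine computation is that, via the change of variable $u=kx$, the elementary fact that $\{1/u\}=1/u$ for $1\le u\le k$, and the classical identity $\int_1^\infty\{v\}\,v^{-s-1}\,dv=\tfrac1{s-1}-\tfrac{\zeta(s)}{s}$, one obtains
\[
\widehat{f_k}(s)=\Bigl(\tfrac1k-k^{-s}\Bigr)\,\frac{\zeta(s)}{s}\qquad(k\ge2).
\]
The key structural point is that $\tfrac1k-k^{-s}$ vanishes at $s=1$, which cancels the pole of $\zeta$ and makes $\widehat{f_k}$ holomorphic throughout $\Re s>0$, so that it indeed lies in $H^2$ --- this is precisely what the particular combination defining $f_k$ is designed to achieve. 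Since the distances $d_n$ are non-increasing in $n$, their limit equals the distance in $H^2(\Re s>\tfrac12)$ from $1/s$ to the closed linear span $\mathcal H$ of the functions $g_k(s):=(\tfrac1k-k^{-s})\zeta(s)/s$, $k\ge2$. Hence $d_n\to0$ if and only if $1/s\in\mathcal H$.

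\emph{Step 2: $d_n\to0$ implies RH.} Suppose RH fails, so $\zeta(\rho)=0$ for some $\rho$ with $\Re\rho>\tfrac12$. Because $\rho$ is an interior point of the half-plane, the point evaluation $h\mapsto h(\rho)$ is a bounded linear functional on $H^2(\Re s>\tfrac12)$; since $\zeta(\rho)=0$ and $\rho\ne0$, it annihilates every generator $g_k$, hence all of $\mathcal H$. But $1/s$ evaluated at $\rho$ equals $1/\rho\ne0$, so $1/s\notin\mathcal H$, and therefore $d_n\not\to0$. Contrapositively, $d_n\to0$ forces $\zeta$ to be zero-free in $\Re s>\tfrac12$, i.e.\ it forces RH.

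\emph{Step 3: RH implies $d_n\to0$.} This is the hard direction, and I expect it to be the main obstacle. Here one must genuinely construct near-optimal approximants: real coefficients $\lambda_k^{(N)}$ such that $\zeta(s)\sum_{k=2}^N\lambda_k^{(N)}\bigl(\tfrac1k-k^{-s}\bigr)\to1$ in the weighted space $L^2\bigl(\Re s=\tfrac12,\ |s|^{-2}\,|ds|\bigr)$, which by Step 1 is exactly the assertion that $1/s$ is approximated by elements of $\mathcal H$. The guiding heuristic is $\sum_{k\ge1}\mu(k)k^{-s}=1/\zeta(s)$, so one takes $\lambda_k^{(N)}=-\mu(k)\,\eta(\log k/\log N)$ for a fixed cutoff $\eta$ with $\eta(0)=1$ supported in $[0,1]$; then $\sum_k\lambda_k^{(N)}(\tfrac1k-k^{-s})$ automatically vanishes at $s=1$, and formally $\zeta(s)\sum_k\lambda_k^{(N)}(\tfrac1k-k^{-s})\to1$. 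The delicate part is to upgrade this to convergence of the weighted $L^2$ error: under RH, $1/\zeta$ extends holomorphically to $\Re s>\tfrac12$ with subpolynomial growth on each line $\Re s=\sigma>\tfrac12$, and one exploits this by representing the error through Perron's formula and running a contour-shift argument that moves the line of integration from $\Re s>1$ down toward the critical line --- legitimate precisely because RH supplies the zero-free region in $\tfrac12<\Re s\le1$ --- and then estimates the resulting integrals using the non-vanishing of $\zeta$ on $\Re s=1$ together with the mean-square bound $\int_T^{2T}|\zeta(\tfrac12+it)|^2\,dt\ll T\log T$. The additional content of B\'aez-Duarte's theorem over Nyman's original one is that the approximants can be assembled from \emph{integer} dilations alone --- which is what the functions $f_k$ encode --- whereas Nyman used the full continuum $\{\rho(\theta/x):0<\theta\le1\}$; pushing the mollified-M\"obius construction through with integer dilations only is what demands the most care.
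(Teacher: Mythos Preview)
Your proposal is sound, and in fact goes further than the paper does. The paper only proves the ``only if'' direction (Corollary~\ref{C:RH}), referring the reader to B\'aez-Duarte \cite{BD03} and Bagchi \cite{Ba06} for the converse; your Step~3 is an honest sketch of that argument, and you correctly flag it as incomplete.

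For the direction that \emph{is} proved, your Step~2 and the paper's argument are the same idea in two dialects. You pass through Paley--Wiener to identify $L^2(0,1)$ with $H^2(\Re s>\tfrac12)$ and then invoke boundedness of the point-evaluation functional at a hypothetical zero~$\rho$; the paper stays in $L^2(0,1)$, applies Cauchy--Schwarz directly to $\int_0^1\bigl(1-\sum\lambda_kf_k\bigr)x^{s-1}\,dx$, and computes $\int_0^1|x^{s-1}|^2\,dx=1/(2\Re s-1)$ by hand. The reproducing kernel underlying your point evaluation is exactly the function $x^{\overline{s}-1}$ appearing in the paper's Cauchy--Schwarz step, so the two arguments unwind to the same inequality. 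What the paper's bare-hands version buys is a quantitative statement you do not extract: for each fixed $n$, the zeta function is zero-free in the explicit region $\Re s>(1+d_n|s|^2)/2$ (Theorem~\ref{T:zerofree}). Your Hardy-space packaging, on the other hand, makes the structural picture cleaner and sets up Step~3 more naturally, since the target $1/s$ and the generators $(\tfrac1k-k^{-s})\zeta(s)/s$ are already sitting in the right space.

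One small remark on Step~1: your derivation of $\widehat{f_k}$ via the identity for $\int_1^\infty\{v\}v^{-s-1}\,dv$ is valid but initially only for $\Re s>1$; you should note (as the paper does in Lemma~\ref{L:Mellin}) that both sides extend holomorphically to $\Re s>0$ because $f_k$ is bounded and the zero of $\tfrac1k-k^{-s}$ at $s=1$ cancels the pole of $\zeta$, so the identity persists by analytic continuation.
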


This result is really an amalgam of two theorems. 
We shall describe its history
in Section~\ref{S:NBD} and prove the ``only if'' part,
which is surprisingly easy.

The article \cite{LR02} describes the results of the computation of $d_n$ for $n$ up to 20000.
The authors of \cite{LR02} observe that
$d_n$ appears to be asymptotic to $C/\sqrt{\log n}$, where $C$ is a positive constant.
It is known that $d_n$ cannot decrease any faster than this \cite{BBLS00}.

While computing $d_n$ for ourselves,
we noticed a curious phenomenon.
Certain quantities arising in this computation always seem to be positive,
even though there is no obvious explanation as to why this should be.
This  leads us to formulate a conjecture,
one form of which is as follows.

\begin{conjecture}\label{Conj:det}
For all $j,k$ with $2\le j\le k$, we have the determinantal inequality
\[
\begin{vmatrix}
\langle f_2,f_2\rangle &\langle f_2,f_3\rangle &\dots &\langle f_2,f_{j-1}\rangle &\langle f_2,f_k\rangle\\
\langle f_3,f_2\rangle &\langle f_3,f_3\rangle &\dots &\langle f_3,f_{j-1}\rangle &\langle f_3,f_k\rangle\\
\vdots &\vdots&\vdots &\vdots &\vdots\\
\langle f_j,f_2\rangle &\langle f_j,f_3\rangle &\dots &\langle f_j,f_{j-1}\rangle &\langle f_j,f_k\rangle
\end{vmatrix}
>0.
\]
\end{conjecture}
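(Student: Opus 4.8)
The plan is first to strip the determinant down to a single inner product. Let $P_{j-1}$ denote the orthogonal projection of $L^2(0,1)$ onto $\spn\{f_2,\dots,f_{j-1}\}$ and set $e_j:=f_j-P_{j-1}f_j$, the residual produced by applying Gram--Schmidt to $f_2,f_3,\dots$ in order. Taking the Schur complement of the top-left $(j-2)\times(j-2)$ block (which is the Gram matrix of $f_2,\dots,f_{j-1}$, of determinant $\prod_{i=2}^{j-1}\|e_i\|^2$) gives the identity
\[
\begin{vmatrix}
\langle f_2,f_2\rangle &\cdots &\langle f_2,f_{j-1}\rangle &\langle f_2,f_k\rangle\\
\vdots &\ddots &\vdots &\vdots\\
\langle f_j,f_2\rangle &\cdots &\langle f_j,f_{j-1}\rangle &\langle f_j,f_k\rangle
\end{vmatrix}
=\Bigl(\textstyle\prod_{i=2}^{j-1}\|e_i\|^2\Bigr)\,\langle e_j,f_k\rangle ,
\]
where the last step uses $e_j\perp\spn\{f_2,\dots,f_{j-1}\}$. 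Since $f_2,f_3,\dots$ are linearly independent --- which is straightforward, for instance because $j\mapsto\{j/k\}$ has exact period $k$, so no nontrivial combination $\sum_k c_kf_k$ can vanish on every interval $(\tfrac1{j+1},\tfrac1j]$ --- the prefactor is strictly positive, and Conjecture~\ref{Conj:det} is \emph{equivalent} to the single assertion
\[
\langle e_j,f_k\rangle>0\qquad(2\le j\le k).
\]
Put differently: in the lower-triangular Cholesky factorization $M=LL^{\top}$ of the infinite Gram matrix $M:=\bigl(\langle f_i,f_j\rangle\bigr)_{i,j\ge2}$, every nonzero entry $L_{kj}=\langle e_j,f_k\rangle/\|e_j\|$ is positive --- equivalently, each $f_k$ has strictly positive component along each earlier Gram--Schmidt direction. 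The diagonal case $k=j$ is simply $\|e_j\|^2>0$, and $j=2$ is immediate because $f_2,f_k\ge0$ and both equal positive constants on $(\tfrac12,1]$; so the content lies in $3\le j<k$.

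A clean way to obtain everything at once would be to prove that $M$ is \emph{strictly totally positive}: the determinant above is precisely the minor of $M$ on the increasing row set $\{2,\dots,j\}$ and increasing column set $\{2,\dots,j-1,k\}$, so strict total positivity would force it to be positive. Since $f_k$ equals $\{j/k\}$ on $(\tfrac1{j+1},\tfrac1j]$, one has the factorization $M=A\,W\,A^{\top}$ with $A=\bigl(\{j/k\}\bigr)_{k\ge2,\,j\ge1}$ and $W=\mathrm{diag}\bigl(1/(j(j+1))\bigr)$; unfortunately this does not settle the matter, because $A$ is far from totally positive --- the sawtooth values $\{j/k\}$ are not monotone in $j$, and even the $2\times2$ minors of $A$ change sign. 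One would therefore need a cleverer presentation of $\spn\{f_2,\dots,f_n\}$, namely a reordered spanning set whose matrix of values on the intervals $(\tfrac1{j+1},\tfrac1j]$ is genuinely totally positive; or, less ambitiously, prove only that the special almost-principal minors appearing in the conjecture are positive, rather than every minor of $M$.

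The fallback is induction on $j$. Granting enough control over $e_2,\dots,e_{j-1}$ (equivalently over the projection $P_{j-1}$), one would try to show that
\[
\langle e_j,f_k\rangle=\langle f_j,f_k\rangle-\langle P_{j-1}f_j,f_k\rangle
\]
stays positive by isolating a dominant positive contribution --- numerically this mass lives on those intervals $(\tfrac1{j+1},\tfrac1j]$ where $f_2,\dots,f_{j-1}$ all vanish, so that $e_j$ there coincides with the large positive function $f_j$ --- and then bounding the remaining terms, which truly do carry both signs. I expect this last estimate to be the crux and the principal obstacle: the $f_k$ enjoy no orthogonality, no monotonicity and no sign-regularity, the cancellations in $\langle e_j,f_k\rangle$ are real rather than artifacts of a poor arrangement, and the closed forms for $\langle f_i,f_j\rangle$ are arithmetically delicate (cotangent/Dedekind-sum type). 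The entire problem thus funnels into finding a usable, sign-preserving description of the Gram--Schmidt vectors $e_j$ --- or of $M^{-1}$ --- and I do not see how to produce one for general $j$ without a genuinely new idea about the arithmetic of the functions $f_k$.
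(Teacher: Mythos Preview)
There is no proof to compare against: the statement is a \emph{conjecture}, and the paper explicitly leaves it open. Your reduction is correct and matches what the paper does establish. The Schur-complement identity you derive,
\[
\begin{vmatrix}
\langle f_2,f_2\rangle &\cdots &\langle f_2,f_{j-1}\rangle &\langle f_2,f_k\rangle\\
\vdots & &\vdots &\vdots\\
\langle f_j,f_2\rangle &\cdots &\langle f_j,f_{j-1}\rangle &\langle f_j,f_k\rangle
\end{vmatrix}
=G(f_2,\dots,f_{j-1})\,\langle e_j,f_k\rangle,
\]
is exactly the content of the paper's Proposition~\ref{P:poscriteria}; the paper argues via the linear functional $h\mapsto G(f_2,\dots,f_{j-1}|f_j,h)$ and normalizes its $e_j$ to unit length, but the substance is identical. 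So you have independently recovered the equivalence of Conjecture~\ref{Conj:det} with Conjecture~\ref{Conj:ip}, and your handling of the easy cases $k=j$ and $j=2$ is correct.

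Beyond that equivalence the paper offers only numerical verification for $2\le j\le k\le 50000$ and one partial result you do not mention: from the asymptotic $\langle f_j,f_k\rangle\sim\tfrac{j-1}{j}\cdot\tfrac{\log k}{2k}$ as $k\to\infty$ with $j$ fixed, one obtains $\langle e_j,f_k\rangle>0$ for all sufficiently large $k$, conditional on the positivity of a single determinant $H(j)$ depending only on $j$ (Theorem~\ref{T:referee}). This disposes of each row of $L$ eventually but gives nothing uniform in $j$. Your exploration of total positivity and induction goes further than anything the paper attempts, and your conclusion---that a genuinely new idea is needed---coincides with the paper's own.
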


Notice that the matrix in Conjecture~\ref{Conj:det} is not quite symmetric.
In the last column, where one might expect an $f_j$ there is instead an $f_k$.

We shall see that the inner products $\langle f_j,f_k\rangle$ are intimately related to
the Riemann zeta function.
Take a look, for example, at formula \eqref{E:fjfkzeta} at the end of the article. 
Indeed, in principle, the knowledge of these inner products suffices to determine whether
the Riemann hypothesis is true. 
There is thus a certain interest in understanding
them better, and, in particular, in explaining the phenomenon alluded to above.

The rest of the article is structured as follows.
In Section~\ref{S:NBD} we prove the ``only if'' part of Theorem~\ref{T:NBD} and briefly discuss
the ``if'' part. Then in Section~\ref{S:computation} we describe how to compute $d_n$.
In Section~\ref{S:conjecture}, we explain the background leading to the conjecture
and some first attempts to prove it.
There are also two appendices containing technical results needed about the functions $f_k$.
In Appendix~A we prove that the $f_k$ are linearly independent,
and in Appendix~B we derive some formulas for the inner products $\langle f_j,f_k\rangle$.


\section{The theorems of Nyman and B\'aez-Duarte.}\label{S:NBD}
We recall that the Riemann zeta function is defined by $\zeta(s):=\sum_{k\ge1}k^{-s}$ if $\Re s>1$,
and that it has a holomorphic extension to the whole of the punctured plane $\CC\setminus\{1\}$.
The Riemann hypothesis can be formulated as saying that $\zeta(s)$ has no zeros in the half-plane
$\Re s>1/2$.

The following theorem gives some information about 
the location of the zeros of $\zeta(s)$ in terms of the quantity
$d_n$  defined  in \eqref{E:dn}.

\begin{theorem}\label{T:zerofree}
$\zeta(s)$ has no zeros  in the disk $\Re s>(1+d_n|s|^2)/2$.
\end{theorem}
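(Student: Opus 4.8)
The plan is to convert the $L^2$-estimate built into $d_n$ into a pointwise bound for $\zeta$ at a hypothetical zero, by means of the Mellin transform $g\mapsto\int_0^1 g(x)x^{s-1}\,dx$ on $(0,1]$. The one genuinely computational step, which I would carry out first, is the identity
\[
\int_0^1 f_k(x)\,x^{s-1}\,dx=\frac{\zeta(s)}{s}\Bigl(\frac1k-\frac1{k^{s}}\Bigr)\qquad(\Re s>0).
\]
To get it, rewrite $f_k(x)=\{1/(kx)\}-\tfrac1k\{1/x\}$ (immediate from $[t]=t-\{t\}$, and the reason the $f_k$ are bounded), substitute $u=1/x$ resp.\ $u=1/(kx)$ in the two resulting integrals, and invoke the classical formula $\zeta(s)=\frac{s}{s-1}-s\int_1^{\infty}\{u\}u^{-s-1}\,du$, valid for $\Re s>0$, $s\ne1$; the two polar terms $\tfrac{1/k}{s-1}$ that appear then cancel. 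Convergence at $x=0$ is automatic because $0\le f_k<1$. (This is essentially the calculation behind formula~\eqref{E:fjfkzeta}.) Combined with $\int_0^1 x^{s-1}\,dx=1/s$, this yields, for any real $\lambda_2,\dots,\lambda_n$ and $g:=1-\sum_{k=2}^n\lambda_k f_k$,
\[
\int_0^1 g(x)\,x^{s-1}\,dx=\frac1s\Bigl(1-\zeta(s)\sum_{k=2}^{n}\lambda_k\bigl(\tfrac1k-\tfrac1{k^{s}}\bigr)\Bigr)\qquad(\Re s>0,\ s\ne1).
\]

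Now let $s_0$ be a zero of $\zeta$ with $\Re s_0>1/2$ (zeros with $\Re s_0\le1/2$ lie outside the region in the theorem, so there is nothing to check for them). At $s=s_0$ the right-hand side above reduces to $1/s_0$, \emph{independently of the choice of} $\lambda_2,\dots,\lambda_n$, the bracketed product being zero. Applying Cauchy--Schwarz in $L^2(0,1)$ to the left-hand side gives
\[
\frac1{|s_0|}=\Bigl|\int_0^1 g(x)\,x^{s_0-1}\,dx\Bigr|\le\|g\|\Bigl(\int_0^1 x^{2\Re s_0-2}\,dx\Bigr)^{1/2}=\frac{\|g\|}{\sqrt{2\Re s_0-1}},
\]
the last integral being finite exactly because $\Re s_0>1/2$. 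Hence $\sqrt{2\Re s_0-1}\le|s_0|\,\|g\|$ for \emph{every} admissible $g$; taking the infimum over $\lambda_2,\dots,\lambda_n$ turns $\|g\|$ into $d_n$, and squaring gives $2\Re s_0-1\le d_n^{2}|s_0|^{2}$, that is, $\Re s_0\le(1+d_n^{2}|s_0|^{2})/2$. This is the zero-free region of Theorem~\ref{T:zerofree}, so the proof is complete.

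I expect the argument to offer little resistance once the Mellin identity is in hand; the remainder is a single application of Cauchy--Schwarz together with the definition of $d_n$. The points needing a moment's care are all minor: interchanging the finite sum with the integral, the fact that the minimum defining $d_n^2$ is attained (the span of $f_2,\dots,f_n$ is finite-dimensional), and the harmless restriction to $\Re s_0>1/2$. As a remark, Theorem~\ref{T:zerofree} yields the ``if'' direction of Theorem~\ref{T:NBD} at once: if $d_n\to0$, then for every fixed $s$ with $\Re s>1/2$ one has $(1+d_n^{2}|s|^2)/2<\Re s$ for all large $n$, so $\zeta(s)\ne0$, i.e.\ the Riemann hypothesis holds.
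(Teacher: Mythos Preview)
Your proof is correct and follows essentially the same route as the paper: establish the Mellin identity of Lemma~\ref{L:Mellin}, observe that at a zero of $\zeta$ the integral $\int_0^1 g(x)x^{s-1}\,dx$ collapses to $1/s$ regardless of the $\lambda_k$, apply Cauchy--Schwarz against $x^{s-1}$, and minimize over the coefficients. The one difference is in how you obtain the Mellin identity: you rewrite $f_k$ via fractional parts and invoke the integral representation $\zeta(s)=\tfrac{s}{s-1}-s\int_1^\infty\{u\}u^{-s-1}\,du$ directly on $\Re s>0$, whereas the paper computes the integral for real $s>2$ from the Dirichlet series and then appeals to analytic continuation. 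Both derivations are standard and comparably short; yours has the mild advantage of working on the whole half-plane from the outset. One terminological slip: what you call the ``if'' direction of Theorem~\ref{T:NBD} is what the paper calls the ``only if'' part (Corollary~\ref{C:RH}).
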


If $d_n\to0$ as $n\to\infty$, then the disks $\Re s>(1+d_n|s|^2)/2$ grow to fill the whole of the 
half-plane $\Re s>1/2$, and so we immediately deduce the following corollary.

\begin{corollary}\label{C:RH}
If $d_n\to0$, then the Riemann hypothesis is true.
\end{corollary}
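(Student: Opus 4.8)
Assuming Theorem~\ref{T:zerofree}, Corollary~\ref{C:RH} is immediate (as already explained: once $d_n\to0$, the disks $\Re s>(1+d_n|s|^2)/2$ grow to fill $\Re s>1/2$, so $\zeta$ has no zero there), so the real work is to prove Theorem~\ref{T:zerofree}, and that is what I would sketch. The plan is to exploit the fact that the Mellin-type transform of each $f_k$ carries $\zeta(s)$ as a factor: at a zero of $\zeta$ the whole span of $\{f_2,\dots,f_n\}$ becomes orthogonal to a single explicit test function $x^{s-1}$, and then the distance $d_n$ from the constant $1$ to that span controls how far into the half-plane $\Re s>1/2$ such a zero can sit. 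The first step is to establish that, for $\Re s>0$,
\[
\int_0^1 f_k(x)\,x^{s-1}\,dx=\frac{\zeta(s)}{s}\Bigl(\frac1k-\frac1{k^s}\Bigr),
\qquad\text{while}\qquad
\int_0^1 x^{s-1}\,dx=\frac1s .
\]
I would derive the first identity by writing $[t]=t-\{t\}$ to get $f_k(x)=\{1/(kx)\}-\tfrac1k\{1/x\}$, substituting $u=1/x$ and $u=1/(kx)$ in the two resulting integrals, splitting off the ranges where $\{u\}=u$, and invoking the classical representation $\zeta(s)=\tfrac{s}{s-1}-s\int_1^\infty\{u\}u^{-s-1}\,du$, valid for $\Re s>0$; the apparent poles at $s=1$ cancel between the two pieces.

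Now suppose $s_0$ is a zero of $\zeta$ with $\Re s_0>1/2$; the goal is to show $2\Re s_0-1\le d_n|s_0|^2$. Observe that $x\mapsto x^{s_0-1}$ lies in $L^2(0,1)$ precisely because $\Re s_0>1/2$, with $\|x^{s_0-1}\|^2=\int_0^1 x^{2\Re s_0-2}\,dx=(2\Re s_0-1)^{-1}$. Let $g:=1-\sum_{k=2}^n\lambda_k f_k$ be the minimizer in \eqref{E:dn} (which exists, being the orthogonal projection of $1$ onto a finite-dimensional subspace), so that $\|g\|=d_n$. Integrating $g$ against $x^{s_0-1}$ and using the identities above together with $\zeta(s_0)=0$, every $f_k$-term vanishes, leaving $\int_0^1 g(x)\,x^{s_0-1}\,dx=1/s_0$. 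The Cauchy--Schwarz inequality then gives
\[
\frac1{|s_0|}=\Bigl|\int_0^1 g(x)\,x^{s_0-1}\,dx\Bigr|\le\|g\|\,\|x^{s_0-1}\|=\frac{d_n}{\sqrt{2\Re s_0-1}},
\]
so that $2\Re s_0-1\le d_n^2|s_0|^2$. Since $0$ belongs to the span, $d_n\le\|1\|=1$, hence $d_n^2\le d_n$ and therefore $2\Re s_0-1\le d_n|s_0|^2$, i.e.\ $\Re s_0\le(1+d_n|s_0|^2)/2$. Because every point of the region $\Re s>(1+d_n|s|^2)/2$ automatically satisfies $\Re s>1/2$, this shows $\zeta$ has no zero in that region, which is Theorem~\ref{T:zerofree}; Corollary~\ref{C:RH} follows as noted above.

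I expect no serious obstacle here — this is the ``surprisingly easy'' half, and once one knows to test against the functions $x^{s-1}$, the rest is a one-line Cauchy--Schwarz estimate. The only step demanding genuine care is the key identity of the first paragraph: disentangling the two floor functions, coaxing the factor $\zeta(s)$ to appear, and checking that the spurious singularity at $s=1$ cancels. One should also confirm absolute convergence of all the integrals in the relevant range, which is routine since each $f_k$ is bounded on $(0,1]$ and $\Re s>1/2$ makes $x^{s-1}$ square-integrable near $0$.
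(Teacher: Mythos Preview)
Your proposal is correct and follows essentially the same route as the paper: establish the Mellin identity $\int_0^1 f_k(x)x^{s-1}\,dx=\zeta(s)(k^{-1}-k^{-s})/s$, observe that at a zero $s_0$ with $\Re s_0>1/2$ every $f_k$ is orthogonal to $x^{s_0-1}$, and apply Cauchy--Schwarz. The only differences are cosmetic: you derive the Mellin identity via the representation $\zeta(s)=\tfrac{s}{s-1}-s\int_1^\infty\{u\}u^{-s-1}\,du$ rather than by direct summation for $s>2$ followed by analytic continuation, and you explicitly insert the step $d_n^2\le d_n$ (using $d_n\le\|1\|=1$) to match the stated region $\Re s>(1+d_n|s|^2)/2$, a point the paper's own proof leaves implicit.
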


The proof of Theorem~\ref{T:zerofree} is based on the following lemma,
which is essentially a computation of the  Mellin transform of $f_k$.

\begin{lemma}\label{L:Mellin}
For each $k\ge2$,
\begin{equation}\label{E:Mellin}
\int_0^1 f_k(x)x^{s-1}\,dx =
\frac{k^{-1}-k^{-s}}{s}\zeta(s) 
\quad(\Re s>0, ~s\ne1).
\end{equation}
\end{lemma}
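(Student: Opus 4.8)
The plan is to establish \eqref{E:Mellin} first on the half-plane $\Re s>1$, where all the integrals and series converge absolutely, and then to pass to the stated range $\Re s>0$, $s\ne1$, by analytic continuation. Throughout I use the description of $f_k$ recorded just after its definition, and I write $f_k=\frac1k g-h$, where $g(x):=[1/x]$ and $h(x):=[1/(kx)]$.

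The first step is to compute $\int_0^1 g(x)x^{s-1}\,dx$ for $\Re s>1$. Since $g$ equals $j$ on $(\frac1{j+1},\frac1j]$, and the integrands in absolute value are nonnegative, Tonelli's theorem gives
\[
\int_0^1 [1/x]\,x^{s-1}\,dx=\sum_{j\ge1} j\int_{1/(j+1)}^{1/j}x^{s-1}\,dx
=\frac1s\sum_{j\ge1} j\Bigl(\frac1{j^s}-\frac1{(j+1)^s}\Bigr),
\]
the interchange being legitimate because $\sum_{j\ge1} j\int_{1/(j+1)}^{1/j}x^{\Re s-1}\,dx=\int_0^1[1/x]\,x^{\Re s-1}\,dx\le\int_0^1 x^{\Re s-2}\,dx<\infty$ when $\Re s>1$. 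An Abel summation (equivalently, a direct rearrangement of the telescoping partial sums, using $N(N+1)^{-s}\to0$) collapses the last series to $\sum_{j\ge1}j^{-s}=\zeta(s)$, so $\int_0^1 [1/x]\,x^{s-1}\,dx=\zeta(s)/s$. The second step treats $h$ by the substitution $u=kx$:
\[
\int_0^1 [1/(kx)]\,x^{s-1}\,dx=k^{-s}\int_0^k [1/u]\,u^{s-1}\,du=k^{-s}\int_0^1 [1/u]\,u^{s-1}\,du=\frac{k^{-s}}{s}\,\zeta(s),
\]
where the middle equality uses $[1/u]=0$ for $u>1$. Combining the two computations yields \eqref{E:Mellin} on $\{\Re s>1\}$.

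It remains to enlarge the domain. On one hand, since $0\le f_k<1$ we have $\int_0^1|f_k(x)|\,x^{\Re s-1}\,dx\le\int_0^1 x^{\Re s-1}\,dx<\infty$ for every $s$ with $\Re s>0$, and the standard argument (Morera's theorem, or differentiation under the integral sign, together with local uniform convergence on compact subsets of $\{\Re s>0\}$) shows that $s\mapsto\int_0^1 f_k(x)x^{s-1}\,dx$ is holomorphic on $\{\Re s>0\}$. On the other hand, the right-hand side of \eqref{E:Mellin} is holomorphic on $\CC\setminus\{0\}$: the only potential singularity there is the simple pole of $\zeta$ at $s=1$, and it is cancelled by the simple zero of $k^{-1}-k^{-s}$ at that point. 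The two sides agree on $\{\Re s>1\}$, so by the identity theorem they agree on $\{\Re s>0\}$, which gives \eqref{E:Mellin} for $\Re s>0$, $s\ne1$ (and in fact removably at $s=1$ as well).

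I expect the only step needing genuine care to be the analytic-continuation argument — verifying that the Mellin integral is holomorphic on $\Re s>0$ and observing the fortunate cancellation of the pole of $\zeta$ at $s=1$ by the factor $k^{-1}-k^{-s}$. The explicit evaluations for $\Re s>1$ are routine; the one pleasant moment there is the telescoping that produces $\zeta(s)$.
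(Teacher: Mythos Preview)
Your proof is correct and follows essentially the same route as the paper's: compute the Mellin transform of $[1/x]$ by summing over the intervals $(\tfrac1{j+1},\tfrac1j]$ and telescoping to obtain $\zeta(s)/s$, treat $[1/(kx)]$ by the substitution $u=kx$, and then extend to $\Re s>0$ by the identity theorem after checking both sides are holomorphic there (using boundedness of $f_k$ on the left and the pole cancellation at $s=1$ on the right). The only cosmetic difference is that the paper first establishes the identity for real $s>2$ rather than on $\{\Re s>1\}$, which makes no material difference to the argument.
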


\begin{proof}
Suppose first that $s$ is real and that $s>2$. We have
\begin{align*}
\int_0^1\Bigl[\frac{1}{x}\Bigr]x^{s-1}\,dx
&=\sum_{j=1}^\infty j \int_{1/(j+1)}^{1/j} x^{s-1}\,dx
=\frac{1}{s}\sum_{j=1}^\infty\Bigl(\frac{j}{j^s}-\frac{j+1}{(j+1)^s}+\frac{1}{(j+1)^s}\Bigr)\\
&=\frac{1}{s}\Bigl(1+\sum_{j=1}^\infty\frac{1}{(j+1)^s}\Bigr)
=\frac{1}{s}\sum_{j=1}^\infty \frac{1}{j^s}=\frac{\zeta(s)}{s}.
\end{align*}
Hence
\[
\int_0^1\Bigl[\frac{1}{kx}\Bigr]x^{s-1}\,dx
=k^{-s}\int_0^k\Bigl[\frac{1}{t}\Bigr]t^{s-1}\,dt
=k^{-s}\int_0^1\Bigl[\frac{1}{t}\Bigr]t^{s-1}\,dt
=\frac{k^{-s}}{s}\zeta(s).
\]
Thus \eqref{E:Mellin} holds for $s$ real with $s>2$. However,
since $f_k$ is a bounded function, the left-hand side of \eqref{E:Mellin} is a holomorphic
function of $s$ for $\Re s>0$.  The right-hand side of \eqref{E:Mellin} is also holomorphic 
in $\Re s>0$ (the zero of $(k^{-1}-k^{-s})$ at $s=1$ cancels the pole of $\zeta(s)$). 
By the identity principle for holomorphic functions, 
the equality \eqref{E:Mellin} persists in the whole half-plane.
\end{proof}

The following corollary of Lemma~\ref{L:Mellin}, though not needed in the proof of the main theorem,
will be useful later on.

\begin{corollary}\label{C:fk1}
For all $k\ge2$, we have 
$\langle f_k,1\rangle =(\log k)/k$.
\end{corollary}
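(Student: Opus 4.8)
The plan is to deduce Corollary~\ref{C:fk1} directly from Lemma~\ref{L:Mellin} by evaluating the Mellin transform at $s=1$. Observe that $\langle f_k,1\rangle = \int_0^1 f_k(x)\,dx$ is exactly the left-hand side of \eqref{E:Mellin} with $s=1$, and that, as noted in the proof of the lemma, this left-hand side extends holomorphically across $s=1$. So the value $\langle f_k,1\rangle$ is obtained by taking the limit of the right-hand side of \eqref{E:Mellin} as $s\to1$.

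The key step is therefore to compute
\[
\lim_{s\to1}\frac{k^{-1}-k^{-s}}{s}\zeta(s).
\]
First I would factor out $1/s$, which tends to $1$, and isolate the potentially singular product $(k^{-1}-k^{-s})\zeta(s)$. Since $\zeta(s)$ has a simple pole at $s=1$ with residue $1$, I would write $\zeta(s)=\frac{1}{s-1}+\gamma+O(s-1)$ near $s=1$. For the other factor, note $k^{-s}=k^{-1}k^{-(s-1)}=k^{-1}e^{-(s-1)\log k}=k^{-1}\bigl(1-(s-1)\log k+O((s-1)^2)\bigr)$, so that $k^{-1}-k^{-s}=k^{-1}(s-1)\log k+O((s-1)^2)$. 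Multiplying the two expansions, the simple zero cancels the simple pole, and the limit is $k^{-1}\log k$. Combining with the factor $1/s\to1$ gives $\langle f_k,1\rangle=(\log k)/k$, as claimed.

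There is no real obstacle here; the only thing to be careful about is the justification that $\langle f_k,1\rangle$ equals the holomorphic extension of the left-hand side evaluated at $s=1$, rather than merely the formal substitution $s=1$ into \eqref{E:Mellin} (where the right-hand side is a priori a $0/0\cdot\infty$ indeterminate form). This is immediate because $f_k$ is bounded on $(0,1]$, so $x\mapsto f_k(x)x^{s-1}$ is integrable and depends holomorphically on $s$ for $\Re s>0$, including at $s=1$; continuity of this holomorphic function then forces its value at $s=1$ to be the limit computed above. One could also avoid limits entirely by a direct computation: $\int_0^1 f_k(x)\,dx=\sum_{j\ge1}\{j/k\}\bigl(\tfrac1j-\tfrac1{j+1}\bigr)$, and rearranging this telescoping-type sum (grouping the $j$ in residue classes mod $k$) recovers $(\log k)/k$; but the Mellin-transform route is shorter and reuses Lemma~\ref{L:Mellin} cleanly.
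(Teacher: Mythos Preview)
Your proof is correct and follows essentially the same route as the paper: both use Lemma~\ref{L:Mellin} and evaluate the limit as $s\to1$ by cancelling the simple zero of $k^{-1}-k^{-s}$ against the simple pole of $\zeta(s)$. The paper phrases the final step as recognizing the difference quotient $-(k^{-s}-k^{-1})/(s-1)$ as the derivative $-\tfrac{d}{ds}k^{-s}\big|_{s=1}$, while you expand in Taylor series, but these are the same computation.
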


\begin{proof}
By Lemma~\ref{L:Mellin}, together with the fact that $\zeta(s)=(s-1)^{-1}+O(1)$ as $s\to1$,
we obtain
\[
\int_0^1 f_k(x)x^{s-1}\,dx =
-\frac{k^{-s}-k^{-1}}{s-1}+o(1)
\quad(s\to 1).
\]
Letting $s\to1$ in this expression, we deduce that
\[
\langle f_k,1\rangle=-\frac{d}{ds}k^{-s}\Bigl|_{s=1}=\frac{\log k}{k}.\qedhere
\]
\end{proof}

\begin{proof}[Proof of Theorem~\ref{T:zerofree}]
Suppose that $\Re s>1/2$ and that $\zeta(s)=0$.
Certainly $s\ne1$, since $\zeta$ has a pole there.
Therefore, by Lemma~\ref{L:Mellin}, we have $\int_0^1 f_k(x)x^{s-1}\,dx=0$ for all $k$.
Consequently,
if $\lambda_2,\dots,\lambda_n\in\RR$, then
\[
\int_0^1\Bigl(1-\sum_{k=2}^n\lambda_kf_k(x)\Bigr)x^{s-1}\,dx=\int_0^1 x^{s-1}\,dx=\frac{1}{s}.
\]
On the other hand, by the Cauchy--Schwarz inequality,
\[
\Bigl |\int_0^1 \Bigl(1-\sum_{k=2}^n\lambda_kf_k(x)\Bigr) x^{s-1}\,dx\Bigr|^2
\le \int_0^1\Bigl(1-\sum_{k=2}^n \lambda_kf_k(x)\Bigr)^2\,dx\cdot\int_0^1 |x^{s-1}|^2\,dx.
\]
Since $\int_0^1 |x^{s-1}|^2\,dx=1/(2\Re s-1)$, we deduce that
\[
\int_0^1\Bigl(1-\sum_{k=2}^n \lambda_kf_k(x)\Bigr)^2\,dx\ge \frac{2\Re s-1}{|s|^2}
\quad(\lambda_2,\dots,\lambda_n\in\RR).
\]
Taking the minimum over all $\lambda_2,\dots,\lambda_n$, we obtain 
\[
d_n^2\ge (2\Re s-1)/|s|^2.
\]
Thus, if $s$ does not satisfy this last inequality, then $\zeta(s)\ne0$.
This proves the theorem.
\end{proof}

Corollary~\ref{C:RH} establishes the ``only if'' part of Theorem~\ref{T:NBD}.
It was proved by Nyman in his thesis \cite{Ny50} in 1950. He also established a weak form of the ``if'' part.
In fact he showed that the uncountable family of functions 
\[
f_\alpha(x):= \frac{1}{\alpha}\Bigl[\frac{1}{x}\Bigr]-\Bigl[\frac{1}{\alpha x}\Bigr] \quad(\alpha\in\RR,\alpha>1)
\]
spans a dense subspace of $L^2(0,1)$ if and only if the Riemann hypothesis is true.
He never published his result, but his  doctoral supervisor Beurling did publish a generalization
in \cite{Be55}, showing that, for $p\in(1,2]$, the $f_\alpha$ span a dense subspace of $L^p[0,1]$ 
if and only if $\zeta(s)$ has no zeros in $\Re s>1/p$. This approach to the Riemann hypothesis is often called the Nyman--Beurling criterion.

Somewhat later, in 1984, Bercovici and Foias \cite{BF84} proved that the $f_\alpha$ do indeed span a dense subspace of $L^1[0,1]$. Unfortunately, this tells us nothing new about the Riemann zeta function.

Much later still, in 2003, B\'aez-Duarte \cite{BD03} showed that, if the Riemann hypothesis is true, then $1$ lies in the $L^2(0,1)$-closure of the span of $\{f_2,f_3,f_4,\dots\}$. This is interesting because there is a closed formula for the inner products $\langle f_j,f_k\rangle$ (whereas no such formula is known for $\langle f_\alpha,f_\beta\rangle$ for general real $\alpha,\beta$). B\'aez-Duarte's proof is quite different from that of Nyman. 
While Nyman's approach is based on the general theory of translation-invariant subspaces of $L^2(\RR)$,
 B\'aez-Duarte's  depends on specific properties of the Riemann zeta function. The survey article of Bagchi \cite{Ba06} contains a very readable account of this.


\section{Computation of $\lowercase{d_n}$.}\label{S:computation}

We shall describe two ways of computing $d_n$.
Both methods take for granted the fact that the  functions $(f_k)_{k\ge2}$ are linearly
independent, which is proved in Appendix~A.
Both methods also assume that we know how to calculate the inner products 
$\langle 1,f_k\rangle$ and $\langle f_j,f_k\rangle$.
From Corollary~\ref{C:fk1}, we already know that $\langle 1,f_k\rangle=(\log k)/k$.
In Appendix~B, we derive a closed formula for the inner products $\langle f_j,f_k\rangle$.

The first way to compute $d_n$ is via  Gram's\footnote{Jorgen Gram (1850--1916) was a Danish mathematician,
whose name is now associated to Hilbert spaces and orthonormal sets. Interestingly, he also made important contributions to the study of the zeros of the Riemann zeta function, though his methods were quite different from those being discussed here.} formula.
Given $h_1,\dots,h_m\in L^2(0,1)$, we write
$G(h_1,h_2,\dots,h_m)$ for their \emph{Gramian}, namely, the determinant of the matrix of inner products
$(\langle h_j,h_k\rangle)_{j,k=1}^m$.
It satisfies
\[
G(h_1,h_2,\dots,h_m)\ge0,
\]
with equality if and only if the set $\{h_1,h_2,\dots,,h_m\}$ is linearly dependent.
For more background on this, we refer to \cite[Section~8.7]{Da75}.

\begin{proposition}\label{P:Gram}
\[
d_n^2=\frac{G(1,f_2,f_3,\dots,f_n)}{G(f_2,f_3,\dots,f_n)}.
\]
\end{proposition}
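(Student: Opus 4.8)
The plan is to recognize this as the classical Gram determinant formula for the distance from a vector to a finite-dimensional subspace, and to prove it by a direct linear-algebra argument. Write $V:=\spn\{f_2,\dots,f_n\}$, so that $d_n=\operatorname{dist}(1,V)=\|1-P_V 1\|$, where $P_V$ denotes orthogonal projection onto $V$. Since $\{f_2,\dots,f_n\}$ is linearly independent (Appendix~A), the denominator $G(f_2,\dots,f_n)$ is strictly positive, so the right-hand side is well defined.

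First I would decompose $1 = g + e$, where $g:=P_V 1\in V$ and $e:=1-g$ satisfies $\langle e, f_k\rangle = 0$ for $k=2,\dots,n$. Write $g=\sum_{k=2}^n \mu_k f_k$ for the (unique) minimizing scalars. Now consider the matrix $M$ whose rows and columns are indexed by $1,f_2,\dots,f_n$, namely $M=\bigl(\langle h_j,h_k\rangle\bigr)$ with $h_0=1$, $h_k=f_k$; its determinant is $G(1,f_2,\dots,f_n)$. The key step is a row operation: in $M$, subtract $\mu_k$ times the row indexed by $f_k$ (for each $k$) from the row indexed by $1$. This does not change the determinant. The new first row has entries $\langle 1 - \sum_k\mu_k f_k, h_j\rangle = \langle e, h_j\rangle$, which equals $0$ for $j=2,\dots,n$ and equals $\langle e, 1\rangle$ for $j=0$. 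Moreover $\langle e,1\rangle = \langle e, e+g\rangle = \langle e,e\rangle = d_n^2$ since $e\perp g$. Expanding the resulting determinant along the first row then gives
\[
G(1,f_2,\dots,f_n) = d_n^2 \cdot G(f_2,\dots,f_n),
\]
which rearranges to the claimed formula.

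A minor point to handle cleanly: I should be explicit that the cofactor of the $(0,0)$-entry after the row operation is exactly $G(f_2,\dots,f_n)$, and that the row operation is legitimate (it is, since it is an elementary operation adding multiples of other rows to the first). I expect no serious obstacle here; the only thing requiring a word of care is the existence and uniqueness of the $\mu_k$, which follows from linear independence of the $f_k$, and the orthogonality $e\perp V$, which is the defining property of the least-squares solution and can be verified directly by differentiating the quadratic $\lambda\mapsto\|1-\sum\lambda_k f_k\|^2$ at its minimum. Alternatively, one may cite the standard reference \cite[Section~8.7]{Da75} for the general identity $\operatorname{dist}(v,\spn\{h_1,\dots,h_m\})^2 = G(v,h_1,\dots,h_m)/G(h_1,\dots,h_m)$ and simply specialize it to $v=1$ and $h_k=f_k$; the short self-contained argument above is perhaps preferable for the reader's convenience.
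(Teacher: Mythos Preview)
Your proof is correct and follows essentially the same approach as the paper: replace $1$ by $e=1-P_V 1$ via elementary row operations and use the orthogonality $e\perp V$ to reduce the Gram determinant. The only cosmetic difference is that the paper performs the analogous column operation as well (so that both the first row and first column are cleared except for the corner entry $\|e\|^2=d_n^2$), whereas you clear only the first row and expand along it; either variant yields the identity immediately.
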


\begin{proof}
Let $f^*_n$ be the function in the span of $\{f_2,\dots,f_n\}$ that minimizes $\|1-f^*_n\|$.
As $f^*_n$ belongs to the span of $\{f_2,\dots,f_n\}$, simple row and column operations yield that
\begin{equation}\label{E:Gram1}
G(1,f_2,\dots,f_n)=G((1-f^*_n),f_2,\dots,f_n).
\end{equation}
Also $(1-f^*_n)$ is orthogonal to the span of $\{f_2,\dots,f_n\}$, 
and in particular we have $\langle 1-f^*_n,f_j\rangle=0$
for $j=2,\dots,n$. 
Therefore, all the off-diagonal entries in the first row and first column of 
$G((1-f^*_n),f_2,\dots,f_n)$ are zero, and hence
\begin{align}
G((1-f^*_n),f_2,\dots,f_n)&=\langle 1-f^*_n,1-f^*_n\rangle.G(f_2,\dots,f_n)\nonumber \\
&=d_n^2G(f_2,\dots,f_n).\label{E:Gram2}
\end{align}
Combining \eqref{E:Gram1} and \eqref{E:Gram2}, we obtain the result.
\end{proof}

In practice, the formula in Proposition~\ref{P:Gram} becomes unwieldy for large values of~$n$.
We shall now derive another method for computing $d_n$ which, 
though superficially more complicated, is computationally far superior.

Fix $n\ge 2$. By applying the Gram--Schmidt procedure to the sequence  $(f_k)_{2\le k\le n}$
we can obtain an orthonormal sequence of functions $(e_j)_{2\le j\le n}$ in $L^2(0,1)$ such that
\begin{equation}\label{E:span}
\spn\{e_2,\dots,e_k\}=\spn\{f_2,\dots,f_k\}\quad (2\le k\le n).
\end{equation}
The sequence $(e_j)$ if uniquely determined if we further normalize it so that
\begin{equation}\label{E:normalize}
\langle e_k,f_k\rangle>0 \quad (2\le k\le n).
\end{equation}
Henceforth, we  always assume this normalization.

By \eqref{E:span}, the quantity $d_n$ is just the $L^2$-distance of  $1$ from the span of $\{e_2,\dots,e_n\}$.
A simple computation  shows that, for any choice of scalars $\lambda_2,\dots,\lambda_n$, we have
\[
\Bigl\|1-\sum_{j=2}^n\lambda_j e_j\Bigr\|^2=1-\sum_{j=2}^n|\langle 1,e_j\rangle|^2+\sum_{j=2}^n|\lambda_j-\langle 1,e_j\rangle|^2,
\]
which is minimized by taking $\lambda_j:=\langle 1,e_j\rangle$ for all $j$. We thus obtain the formula
\begin{equation}\label{E:dnformula}
d_n^2=1-\sum_{j=2}^n|\langle 1,e_j\rangle|^2.
\end{equation}

It remains to express the inner products $\langle 1,e_j\rangle$ in terms of the data $\langle f_j,f_k\rangle$
and $\langle 1,f_k\rangle$. 
Define
\[
L_{kj}:=\langle f_k,e_j\rangle
\quad(j,k\in\{2,\dots,n\}).
\]
Note  that,  as $f_k\in\spn\{e_2,\dots,e_k\}$,
we have  $L_{kj}=\langle f_k,e_j\rangle=0$ if $j>k$. In other words, $L$ is a lower triangular matrix.
Further, $L_{kk}>0$ for all $k$ by \eqref{E:normalize}, so $L$ is invertible.
Expanding $f_k$ as $f_k=\sum_{j=2}^n \langle f_k,e_j\rangle e_j$, we have
\[
\langle 1,f_k\rangle=\sum_{j=2}^n\langle f_k,e_j\rangle\langle 1,e_j\rangle.
\]
In other words,  $F=LE$,  where $F_k:=\langle 1,f_k\rangle$ and $E_j:=\langle 1,e_j\rangle$.
Also, we have
\[
\langle f_k,f_l\rangle=\sum_{j=2}^n\langle f_k,e_j\rangle\langle e_j,f_l\rangle.
\]
In other words, $P=LL^t$, where $P_{kl}:=\langle f_k,f_l\rangle$. In fact, what we have done
is to construct the \emph{Cholesky decomposition} of $P$, namely the unique factorization of $P=LL^t$
where $L$ is lower triangular with positive entries on the diagonal.

We summarize these remarks in a proposition.

\begin{proposition}\label{P:GS}
Let $n\ge2$, and, for $j,k\in\{2,\dots,n\}$, define
\[
P_{jk}:=\langle f_j,f_k\rangle
\quad\text{and}\quad
F_k:=\langle 1,f_k\rangle=\frac{\log k}{k}.
\]
Let $P=LL^t$ be the Cholesky decomposition of $P$, and let $E$ be the solution of 
the triangular linear system $LE=F$.
Then
\[
d_n^2=1-\|E\|^2.
\]
\end{proposition}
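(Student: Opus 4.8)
The plan is to show that Proposition~\ref{P:GS} is essentially a restatement of formula \eqref{E:dnformula}, which has already been derived, so the only genuine work is to verify that the Cholesky decomposition and the triangular solve correctly produce the inner products $\langle 1,e_j\rangle$. Concretely, I would proceed as follows. First, invoke the Gram--Schmidt construction described just before the proposition: since the $(f_k)_{2\le k\le n}$ are linearly independent (Appendix~A), there is a unique orthonormal sequence $(e_j)$ satisfying \eqref{E:span} and the normalization \eqref{E:normalize}. With this sequence fixed, formula \eqref{E:dnformula} gives $d_n^2 = 1 - \sum_{j=2}^n |\langle 1,e_j\rangle|^2 = 1 - \|E\|^2$, where $E_j := \langle 1,e_j\rangle$. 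So it remains only to identify this vector $E$ with the solution of the system $LE = F$.

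Next I would pin down the matrix $L$. Define $L_{kj} := \langle f_k,e_j\rangle$ for $j,k\in\{2,\dots,n\}$. Because $f_k \in \spn\{e_2,\dots,e_k\}$ by \eqref{E:span}, we get $L_{kj} = 0$ whenever $j > k$, so $L$ is lower triangular; and $L_{kk} = \langle f_k,e_k\rangle > 0$ by \eqref{E:normalize}. Expanding $f_k$ in the orthonormal basis as $f_k = \sum_{j=2}^n \langle f_k,e_j\rangle e_j$ and taking inner products, one gets on the one hand
\[
\langle 1,f_k\rangle = \sum_{j=2}^n \langle f_k,e_j\rangle\,\langle 1,e_j\rangle,
\]
i.e.\ $F = LE$, and on the other hand
\[
\langle f_k,f_l\rangle = \sum_{j=2}^n \langle f_k,e_j\rangle\,\langle e_j,f_l\rangle = \sum_{j=2}^n L_{kj}L_{lj},
\]
i.e.\ $P = LL^t$. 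Since $L$ is lower triangular with strictly positive diagonal, this is precisely the Cholesky decomposition of $P$, which is unique; hence the $L$ arising from Gram--Schmidt coincides with the $L$ in the statement. Then $E = L^{-1}F$ is the unique solution of $LE=F$ (the diagonal of $L$ being nonzero makes $L$ invertible), and substituting back into \eqref{E:dnformula} gives $d_n^2 = 1 - \|E\|^2$.

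There is no real obstacle here; the proposition is a packaging of the preceding discussion, and the proof is essentially a pointer back to \eqref{E:dnformula} together with the observation that $P=LL^t$ with $L$ lower triangular and positive-diagonal \emph{is} the Cholesky decomposition. The one point that deserves a sentence of care is the uniqueness of the Cholesky factorization, which is what lets us assert that the abstractly-defined $L$ (from Gram--Schmidt) and the algorithmically-defined $L$ (from Cholesky of the explicit matrix $P$) are the same object, so that the same is true of $E$; this uniqueness is standard and follows from positive-definiteness of $P$, itself a consequence of the linear independence of the $f_k$. I would also remark in passing that, since $d_n^2 \ge 0$, the formula shows $\|E\| \le 1$, a sanity check on the computation.
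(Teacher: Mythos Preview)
Your proposal is correct and follows essentially the same route as the paper: the proposition is presented there as a summary of the preceding discussion, which defines $L_{kj}:=\langle f_k,e_j\rangle$, observes that $L$ is lower triangular with positive diagonal, and derives $F=LE$ and $P=LL^t$ exactly as you do, then appeals to \eqref{E:dnformula}. Your explicit remark that uniqueness of the Cholesky factorization is what identifies the Gram--Schmidt $L$ with the algorithmic $L$ is a welcome clarification that the paper leaves implicit.
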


There are efficient and stable numerical methods for computing the  Cholesky factorization
of a positive-definite matrix (see, e.g., \cite[Chapter~IV, Lecture~32]{TB97}). 
Thus Proposition~\ref{P:GS} is quite practical for large-scale computations.


\section{The Positivity Conjecture.}\label{S:conjecture} 
Tables~\ref{Tb:P} and~\ref{Tb:L} below show the first few entries of the matrices $P_{jk}:=\langle f_j,f_k\rangle$
and $L_{kj}:=\langle f_k,e_j\rangle$, rounded to four decimal places.

\begin{table}[ht]
\caption{Entries of $P_{jk}:=\langle f_j,f_k\rangle$ for $j,k\in\{2,\dots,9\}$.}
\begin{center}
\begin{tabular}{c|cccccccccc}\label{Tb:P}
 & 2 & 3 & 4 & 5 & 6 & 7 & 8 & 9 \\
 \hline
2 & 0.1733 & 0.1063 & 0.1184 & 0.0918 & 0.0931 & 0.0784 & 0.0778 & 0.0683\\				
3 & 0.1063 & 0.1770	& 0.1220 & 0.1118 & 0.1178 & 0.0976 & 0.0908 & 0.0914\\
4 & 0.1184 & 0.1220	& 0.1618 & 0.1194 & 0.1103 & 0.1023 & 0.1060	 & 0.0912\\	
5 & 0.0918 & 0.1118	& 0.1194 & 0.1456 & 0.1125 & 0.1019 & 0.0956 & 0.0918\\
6 & 0.0931 & 0.1178	& 0.1103 & 0.1125 & 0.1313 & 0.1049 & 0.0957	 & 0.0909\\	
7 & 0.0784 & 0.0976	& 0.1023 & 0.1019 & 0.1049 & 0.1192 & 0.0976 & 0.0889\\
8 & 0.0778 & 0.0908 & 0.1060 & 0.0956 & 0.0957 & 0.0976 & 0.1089 & 0.0910\\
9 & 0.0683 & 0.0914 & 0.0912 & 0.0918 & 0.0909 & 0.0889 & 0.0910 & 0.1002
\end{tabular}
\end{center}
\end{table}

\begin{table}[ht]
\caption{Entries of $L_{kj}:=\langle f_k,e_j\rangle$ for $j,k\in\{2,\dots,9\}$.}
\begin{center}
\begin{tabular}{c|cccccccccc}\label{Tb:L}
 & 2 & 3 & 4 & 5 & 6 & 7 & 8 & 9 \\
 \hline
2 &  0.4163 & 0 & 0 & 0 & 0 & 0 & 0 & 0\\ 
3 &0.2554 & 0.3343 & 0 & 0 & 0 & 0 & 0 & 0\\ 
4 & 0.2845 & 0.1475 & 0.2430 & 0 & 0 & 0 & 0 & 0\\ 
5 & 0.2205 & 0.1659 & 0.1325 & 0.2277 & 0 & 0 & 0 & 0\\ 
6 &  0.2237 & 0.1814 & 0.0819 & 0.0976 & 0.1792 & 0 & 0 & 0\\ 
7 &  0.1883 & 0.1480 & 0.1107 & 0.0929 & 0.0991 & 0.1764 & 0 & 0\\ 
8 & 0.1868 & 0.1288  & 0.1395 & 0.0638 & 0.0721 & 0.0841 & 0.1471 & 0\\ 
9 & 0.1641 & 0.1479 & 0.0934 & 0.0822 & 0.0651 & 0.0664 & 0.0863 & 0.1409
\end{tabular}
\end{center}
\end{table}

As expected, $P$ is symmetric and $L$ is lower triangular.
Another obvious feature is that all the entries are positive,
indeed strictly positive if one excludes the entries of $L$ above the diagonal.
Should we have expected this? In the case of $P$, the answer is certainly yes,
since, as remarked at the beginning of the article, 
$f_k(x)=1/k$ on $(\frac{1}{2},1]$ and is nonnegative elsewhere on $(0,1]$,  so
\[
\langle f_j,f_k\rangle\ge \frac{1}{2jk}>0.
\]
On the other hand, for $L$ the answer is not so clear. Certainly the diagonal elements of $L$
are positive, because of our normalization \eqref{E:normalize}. 
Also, the entries in the first column of $L$ are positive, because
\[
\langle f_k,e_2\rangle =\frac{\langle f_k,f_2\rangle}{\|f_2\|}>0
\quad(k\ge2).
\]
But as for the other entries strictly below the diagonal,
there does not seem to be any obvious explanation as to why they should be positive. 
Is it just a coincidence?

Intrigued by this, we have undertaken rather more detailed calculations, 
going much further than $n=9$ and to much higher precision. According to our 
computations,\footnote{The computations were performed using MATLAB 2017.}
\begin{equation}\label{E:50K}
\boxed{
\langle e_j,f_k\rangle>0
\quad\text{for all $j,k$ with $2\le j\le k\le 50000$}.
}
\end{equation}
This is certainly not typical of orthonormal sequences obtained via the Gram--Schmidt process.
Emboldened by the compelling numerical evidence, we make the following conjecture.

\begin{conjecture}\label{Conj:ip}
$\langle e_j,f_k\rangle>0$ for all $j,k$ with $2\le j\le k$.
\end{conjecture}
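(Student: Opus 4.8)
Our plan is to work with the equivalent determinantal form of the conjecture, Conjecture~\ref{Conj:det}. First we would record the equivalence explicitly. Write $Q_{j-1}$ for the orthogonal projection of $L^2(0,1)$ onto $\spn\{f_2,\dots,f_{j-1}\}$ and put $g_j:=f_j-Q_{j-1}f_j$, so that $e_j=g_j/\|g_j\|$. Cramer's rule applied to the normal equations for $Q_{j-1}f_j$ yields the classical determinant formula for the Gram--Schmidt process,
\[
\langle e_j,f_k\rangle=\frac{\langle g_j,f_k\rangle}{\|g_j\|}
=\frac{1}{\sqrt{G(f_2,\dots,f_{j-1})\,G(f_2,\dots,f_j)}}
\begin{vmatrix}
\langle f_2,f_2\rangle & \cdots & \langle f_2,f_{j-1}\rangle & \langle f_2,f_k\rangle\\
\vdots & & \vdots & \vdots\\
\langle f_j,f_2\rangle & \cdots & \langle f_j,f_{j-1}\rangle & \langle f_j,f_k\rangle
\end{vmatrix},
\]
where $G(\emptyset):=1$. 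Since the Gramians in the denominator are strictly positive (Appendix~A), it suffices to control the minor $M_{j,k}$ in the numerator, which is exactly the quantity appearing in Conjecture~\ref{Conj:det}. The most natural sufficient condition, that the matrix $P=(\langle f_j,f_k\rangle)$ be totally positive, must be abandoned: the $2\times2$ minor $\langle f_2,f_3\rangle\langle f_3,f_4\rangle-\langle f_2,f_4\rangle\langle f_3,f_3\rangle$ is already negative (see Table~\ref{Tb:P}). The minors in Conjecture~\ref{Conj:det} form a very restricted family — the rows are always an initial segment $\{2,\dots,j\}$ and the columns are an initial segment $\{2,\dots,j-1\}$ together with a single trailing index $k\ge j$ — so one must find a reason why this particular family is better behaved than a generic minor.

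A promising avenue exploits the identity $f_k(x)=\{1/(kx)\}-\tfrac1k\{1/x\}$, equivalently $f_k=\sum_{n\ge1}\{n/k\}\mathbf 1_{I_n}$ with $I_n:=(\tfrac1{n+1},\tfrac1n]$. In the orthogonal basis $(\mathbf 1_{I_n})_{n\ge1}$ this exhibits $P=A^{t}WA$, where $A:=(\{n/k\})_{n\ge1,\,k\ge2}$ and $W:=\operatorname{diag}\bigl(\tfrac1{n(n+1)}\bigr)$, so that by the Cauchy--Binet formula
\[
M_{j,k}=\sum_{S}\Bigl(\textstyle\prod_{n\in S}\tfrac1{n(n+1)}\Bigr)\det A[S,\{2,\dots,j\}]\;\det A[S,\{2,\dots,j-1,k\}],
\]
the sum running over all $(j-1)$-element sets $S$ of positive integers. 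The two row-selections of $A$ differ only in their last column, so the idea is to show that, for $k\ge j$, replacing column $j$ of $A$ by column $k$ never reverses the sign of $\det A[S,\cdot]$; all terms would then carry the same sign, and $M_{j,k}>0$ would follow once one checks that they do not all vanish. Establishing this ``sign-regularity in the trailing column'' for the columns of $A$ — say by an induction on $k$, tracking how $\det A[S,\{2,\dots,j-1,k\}]$ evolves as $k$ grows — is, we expect, the heart of the difficulty.

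Should the Cauchy--Binet route resist, an alternative is a direct induction on $j$ at the level of the residuals: one must show $\langle g_j,f_k\rangle>0$ for all $k\ge j$. The inductive hypothesis supplies $\langle f_i,e_l\rangle>0$ and $\langle e_l,f_k\rangle>0$ for all $l<j$, hence $\langle Q_{j-1}f_j,f_k\rangle>0$; but since $\langle g_j,f_k\rangle=\langle f_j,f_k\rangle-\langle Q_{j-1}f_j,f_k\rangle$, this estimate points the wrong way, and the induction must therefore be strengthened — for instance by carrying along a quantitative lower bound for $\langle g_j,f_k\rangle$, perhaps measured against $\langle g_j,1\rangle$ or $\langle g_j,f_j\rangle=\|g_j\|^2$, robust enough to survive the subtraction of the projection term.

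In any of these approaches the main obstacle is the same: there is no visible monotone or totally positive structure to lean on. The $f_k$ are step functions, and their linear combinations have unboundedly many sign changes, so the classical Chebyshev-system machinery that would automatically force a Gram matrix to be totally positive does not apply here. A sensible first test would be to settle the case $j=3$, namely the single inequality $\langle f_2,f_2\rangle\langle f_3,f_k\rangle>\langle f_2,f_3\rangle\langle f_2,f_k\rangle$ for every $k\ge3$, directly from the series $\langle f_i,f_l\rangle=\sum_{n\ge1}\{n/i\}\{n/l\}/(n(n+1))$, and to see whether the argument admits any generalization. Even a convincing heuristic for why Gram--Schmidt applied to this particular sequence should produce entrywise-positive coefficients would, we believe, be of genuine interest.
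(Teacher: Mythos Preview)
The statement you are attempting is Conjecture~\ref{Conj:ip}, and the paper does \emph{not} prove it. It is presented as an open problem; the authors write in the Conclusion, ``Alas, we have been unable to answer any of these questions!'' So there is no proof in the paper to compare against.

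Your proposal is likewise not a proof: it is an outline of possible strategies, and you are candid about this. Two remarks are in order.

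First, the part you do carry out --- the reduction to the determinantal inequality $M_{j,k}>0$ via the explicit formula for $\langle e_j,f_k\rangle$ in terms of Gramians --- is correct and is exactly the content of Proposition~\ref{P:poscriteria} in the paper (your formula is slightly sharper, giving the actual value rather than just the sign, but the equivalence of Conjectures~\ref{Conj:ip} and~\ref{Conj:det} is the same). Your observation that $P$ is not totally positive, ruling out the most naive route, is also consistent with the paper's remark that permuting the $f_k$ (exchanging $f_2$ and $f_6$) already breaks the determinantal inequality.

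Second, neither of your two substantive proposals is brought to a conclusion. For the Cauchy--Binet approach, the claimed ``sign-regularity in the trailing column'' of $A=(\{n/k\})$ is precisely what would need to be proved, and you give no mechanism for it; indeed, since the columns of $A$ are fractional-part sequences with no evident variation-diminishing structure, there is no reason to expect the minors $\det A[S,\{2,\dots,j-1,k\}]$ to keep a fixed sign as $S$ varies. For the induction on $j$, you yourself note that the natural estimate ``points the wrong way,'' and the suggested strengthening is left unspecified. In short: the reduction step is fine and matches the paper, but the conjecture itself remains open in both the paper and your proposal.
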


Though this looks simple, it involves the functions $e_j$, which are actually quite complicated to understand.
After all, the Riemann hypothesis itself is equivalent to the statement that $\sum_{j\ge2}|\langle 1,e_j\rangle|^2=1$
(just combine \eqref{E:dnformula} with Theorem~\ref{T:NBD}).
The following proposition gives some  criteria for the inequality $\langle e_j,f_k\rangle>0$,
expressed purely in terms of the original
functions $f_k$. It is convenient to introduce the notation
\[
G(f_2,\dots,f_i|g,h):=
\begin{vmatrix}
\langle f_2,f_2\rangle &\langle f_2,f_3\rangle &\dots &\langle f_2,f_i\rangle &\langle f_2,h\rangle\\
\langle f_3,f_2\rangle &\langle f_3,f_3\rangle &\dots &\langle f_3,f_i\rangle &\langle f_3,h\rangle\\
\vdots &\vdots&\vdots &\vdots &\vdots\\
\langle f_i,f_2\rangle &\langle f_i,f_3\rangle &\dots &\langle f_i,f_i\rangle &\langle f_i,h\rangle\\
\langle g,f_2\rangle &\langle g,f_3\rangle &\dots &\langle g,f_i\rangle &\langle g,h\rangle
\end{vmatrix}.
\]
If $i=1$, then we interpret $G(f_2,\dots,f_i|g,h)$ simply as $\langle g,h\rangle$.

\begin{proposition}\label{P:poscriteria}
Let $j\ge2$ and let $f\in L^2(0,1)$. The following statements are equivalent.
\begin{enumerate}
\item $\langle e_j,f\rangle>0$;
\item $G(f_2,\dots, f_{j-1}| f_j,f)>0$;
\item $G(f_2,\dots,f_{j-1},(f_j+f))>G(f_2,\dots,f_{j-1},(f_j-f))$.
\end{enumerate}
\end{proposition}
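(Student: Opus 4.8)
The plan is to funnel all three conditions through a single quantity, namely $\langle g_j,f\rangle$, where $g_j$ denotes the component of $f_j$ orthogonal to $V:=\spn\{f_2,\dots,f_{j-1}\}$, and then to check that each of (2) and (3) is equivalent to $\langle g_j,f\rangle>0$. The first observation I would record is that, by the construction of the Gram--Schmidt sequence together with the normalization \eqref{E:normalize}, we have $e_j=g_j/\|g_j\|$: here $\|g_j\|>0$ because the $f_k$ are linearly independent (Appendix~A), and the sign is correct since $\langle g_j,f_j\rangle=\|g_j\|^2>0$. Hence $\langle e_j,f\rangle$ has the same sign as $\langle g_j,f\rangle$, and it suffices to treat (2) and (3).

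For the equivalence of (1) and (2), I would evaluate the mixed Gramian $G(f_2,\dots,f_{j-1}\,|\,f_j,f)$ by a row operation on its determinant. Writing $f_j=g_j+\sum_{k=2}^{j-1}c_kf_k$ and subtracting $\sum_k c_k$ times the row indexed by $f_k$ from the last row, that row becomes $(\langle g_j,f_2\rangle,\dots,\langle g_j,f_{j-1}\rangle,\langle g_j,f\rangle)=(0,\dots,0,\langle g_j,f\rangle)$, since $g_j\perp V$. Expanding along this last row gives
\[
G(f_2,\dots,f_{j-1}\,|\,f_j,f)=\langle g_j,f\rangle\,G(f_2,\dots,f_{j-1}).
\]
Since $G(f_2,\dots,f_{j-1})>0$ (Appendix~A, with the convention that the empty Gramian equals $1$, which also disposes of the case $j=2$, where this reduces to the stated convention $G(f_2,\dots,f_1\,|\,f_j,f)=\langle f_j,f\rangle$), condition (2) is equivalent to $\langle g_j,f\rangle>0$.

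For the equivalence of (1) and (3), the key input is the standard identity $G(f_2,\dots,f_{j-1},h)=\|h-P_Vh\|^2\,G(f_2,\dots,f_{j-1})$, where $P_V$ is orthogonal projection onto $V$; this is proved exactly as Proposition~\ref{P:Gram} (see also \cite[Section~8.7]{Da75}). The right-hand side is a nonnegative quadratic form in $h$ whose underlying linear map is $I-P_V$, so polarizing at $h=f_j\pm f$ and using $P_V(f_j\pm f)=P_Vf_j\pm P_Vf$ together with $\langle g_j,P_Vf\rangle=0$ yields
\[
G(f_2,\dots,f_{j-1},(f_j+f))-G(f_2,\dots,f_{j-1},(f_j-f))=4\langle g_j,f\rangle\,G(f_2,\dots,f_{j-1}),
\]
so (3) too is equivalent to $\langle g_j,f\rangle>0$. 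This closes the chain of equivalences.

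I do not expect a genuine obstacle here: the whole argument rests on the two displayed identities, and the only points requiring care are the index bookkeeping in the determinant row reduction used for (2) and keeping the conventions consistent in the degenerate case $j=2$ (where $V=\{0\}$, $g_2=f_2$, and the empty Gramian is $1$). As an alternative to passing through $P_V$ in the step for (3), one could derive the displayed difference identity directly from the multilinearity of the determinant in its last row and last column; routing it through the projection simply seems cleaner.
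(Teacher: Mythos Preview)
Your argument is correct and is essentially the paper's proof in different packaging: both establish $G(f_2,\dots,f_{j-1}\mid f_j,f)=\langle g_j,f\rangle\,G(f_2,\dots,f_{j-1})$ (the paper via the linear functional $\phi(h)=G(f_2,\dots,f_{j-1}\mid f_j,h)$ and a decomposition of $f$ along $e_2,\dots,e_j$; you via an explicit row reduction), and for (2)$\iff$(3) the paper applies polarization directly to the bilinear form $(g,h)\mapsto G(f_2,\dots,f_{j-1}\mid g,h)$, which is precisely the ``alternative'' route you mention at the end.
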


As a special case, we obtain the following corollary.

\begin{corollary}
$\langle e_j,f_k\rangle>0$ if and only if $G(f_2,\dots,f_{j-1}|f_j,f_k)>0$.
\end{corollary}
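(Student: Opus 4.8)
The plan is to reduce each of (1), (2), (3) to the single scalar inequality
\[
\langle Qf_j,Qf\rangle>0,
\]
where $P$ denotes the orthogonal projection of $L^2(0,1)$ onto $V:=\spn\{f_2,\dots,f_{j-1}\}$ and $Q:=I-P$ (so $V=\{0\}$ and $Q=I$ when $j=2$). Since the corollary is exactly the equivalence (1)$\Leftrightarrow$(2) applied to $f=f_k$, it suffices to prove the proposition. Throughout I would use that $f_2,\dots,f_j$ are linearly independent (Appendix~A), so that in particular $Qf_j\ne0$ and $G(f_2,\dots,f_{j-1})>0$.

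First I would identify $e_j$ explicitly. By \eqref{E:span} and \eqref{E:normalize}, $e_j$ is the unique unit vector in $\spn\{f_2,\dots,f_j\}\cap V^{\perp}$ with $\langle e_j,f_j\rangle>0$. That intersection is one-dimensional and contains the nonzero vector $Qf_j=f_j-Pf_j$, so $e_j=\pm Qf_j/\|Qf_j\|$; and since $\langle Qf_j,f_j\rangle=\|Qf_j\|^{2}>0$, the correct sign is $+$, giving $e_j=Qf_j/\|Qf_j\|$. Consequently, for every $f\in L^2(0,1)$,
\[
\langle e_j,f\rangle=\frac{\langle Qf_j,f\rangle}{\|Qf_j\|}=\frac{\langle Qf_j,Qf\rangle}{\|Qf_j\|},
\]
the last step because $Qf_j\perp V$ while $f-Qf=Pf\in V$. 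As the denominator is positive, statement (1) holds if and only if $\langle Qf_j,Qf\rangle>0$.

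Next I would establish the determinantal identity
\[
G(f_2,\dots,f_{j-1}\,|\,g,h)=G(f_2,\dots,f_{j-1})\,\langle Qg,Qh\rangle\qquad(g,h\in L^2(0,1)),
\]
which reduces to a Schur-complement computation (and could alternatively be quoted from \cite[Section~8.7]{Da75}). Writing the defining determinant in block form with the invertible Gram matrix $A$ of $f_2,\dots,f_{j-1}$ in the top-left corner, it equals $\det A\cdot(\langle g,h\rangle-c^{t}A^{-1}b)$, where $b$ and $c$ are the columns of inner products of the $f_i$ against $h$ and against $g$ respectively; one then checks that $A^{-1}b$ is precisely the coordinate vector of $Ph$ in the basis $f_2,\dots,f_{j-1}$, whence $c^{t}A^{-1}b=\langle g,Ph\rangle$ and $\langle g,h\rangle-c^{t}A^{-1}b=\langle g,Qh\rangle=\langle Qg,Qh\rangle$. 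The case $j=2$ is just the stated convention $G(\,\cdot\,|g,h)=\langle g,h\rangle=\langle Qg,Qh\rangle$. Taking $g=f_j$, $h=f$ and using $G(f_2,\dots,f_{j-1})>0$ gives: statement (2) holds if and only if $\langle Qf_j,Qf\rangle>0$, i.e.\ if and only if (1).

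Finally, specializing the displayed identity to $h=g$ yields $G(f_2,\dots,f_{j-1},g)=G(f_2,\dots,f_{j-1})\,\|Qg\|^{2}$. Applying this with $g=f_j+f$ and with $g=f_j-f$ and subtracting, the polarization identity $\|Qf_j+Qf\|^{2}-\|Qf_j-Qf\|^{2}=4\langle Qf_j,Qf\rangle$ shows that the difference of the two Gramians in (3) equals $4\,G(f_2,\dots,f_{j-1})\,\langle Qf_j,Qf\rangle$; since $G(f_2,\dots,f_{j-1})>0$, statement (3) is again equivalent to $\langle Qf_j,Qf\rangle>0$, hence to (1). This closes the cycle, and the corollary follows by setting $f=f_k$. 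I expect the only step requiring genuine care to be the Schur-complement identity in the third paragraph — specifically verifying that $A^{-1}b$ encodes the projection $Ph$, together with the bookkeeping for the empty base case $j=2$ — but this is routine linear algebra rather than a real obstacle; once it is in hand, the deductions of (2) and (3) are purely formal.
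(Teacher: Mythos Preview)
Your proof is correct. It differs from the paper's in one structural way: you establish the explicit Schur-complement identity
\[
G(f_2,\dots,f_{j-1}\mid g,h)=G(f_2,\dots,f_{j-1})\,\langle Qg,Qh\rangle
\]
and read all three equivalences off from it, whereas the paper treats $\phi(h):=G(f_2,\dots,f_{j-1}\mid f_j,h)$ as an abstract linear functional, observes that it vanishes both on $\spn\{f_2,\dots,f_{j-1}\}$ and on $(\spn\{f_2,\dots,f_j\})^{\perp}$, and then deduces $\phi(f)=\langle f,e_j\rangle\,\phi(e_j)$ directly from the expansion $f=\sum_{i\le j}\langle f,e_i\rangle e_i+(\text{remainder})$, never computing the Schur complement. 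The paper's route is a touch shorter for (1)$\Leftrightarrow$(2) since it bypasses any matrix algebra, but your explicit identity has the advantage of yielding a single formula from which all three statements, and indeed the exact value of the bilinear form, drop out uniformly; for (2)$\Leftrightarrow$(3) both arguments reduce to polarization.
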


Thus Conjecture~\ref{Conj:ip} can be reformulated as Conjecture~\ref{Conj:det}.
One holds if and only the other does.

\begin{proof}[Proof of Proposition~\ref{P:poscriteria}]
Define a linear functional $\phi:L^2(0,1)\to\RR$ by
\[
\phi(h):=G(f_2,\dots,f_{j-1}|f_j,h).
\]
Clearly $\phi(h)=0$ if $h$ belongs to the span of $\{f_2,\dots,f_{j-1}\}$ or if $h$ is orthogonal to the span
of $\{f_2,\dots,f_j\}$. Given $f\in L^2(0,1)$, we can write it as
\[
f=\sum_{i=2}^{j-1}\langle f,e_i\rangle e_i+\langle f,e_j\rangle e_j +\Bigl(f-\sum_{i=2}^j\langle f,e_i\rangle e_i\Bigr).
\]
By the remarks just made,  $\phi$ vanishes on both the first term and the third. Hence
\[
\phi(f)=\langle f,e_j\rangle \phi(e_j).
\]
Note also that $\phi(f_j)=G(f_2,\dots,f_{j-1},f_j)>0$. It follows that $\phi(e_j)\ne0$ and that
\[
\frac{\langle e_j,f\rangle}{\langle e_j,f_j\rangle}=\frac{G(f_2,\dots,f_{j-1}|f_j,f)}{G(f_2,\dots,f_{j-1},f_j)}.
\]
Since both denominators are positive, we deduce that
\[
\langle e_j,f\rangle>0\iff G(f_2,\dots,f_{j-1}|f_j,f)>0,
\] 
which establishes the equivalence between parts (1) and (2).
  
The map $(g,h)\mapsto G(f_2,\dots,f_{j-1}|g,h)$ is a symmetric bilinear form.
By the polarization identity, it follows that
\[
4G(f_2,\dots,f_{j-1}|g,h)
=G(f_2,\dots,f_{j-1},(g+h))-G(f_2,\dots,f_{j-1},(g-h)).
\]
The equivalence between parts (2) and (3) follows immediately from this identity.
\end{proof}

This proposition enables us to quickly rule out a possible variant of the conjecture.
A calculation shows that
\[
G(f_6,f_3,f_4|f_5,f_2)\approx -1.6493\times 10^{-6}<0.
\]
In other words, if we exchange $f_2$ and $f_6$, then the conjecture no longer holds.
Thus the order of the $(f_k)$ matters.

Another application of the proposition was communicated to us by one of the anonymous referees,
whose contribution we gratefully acknowledge.  It is based on the following asymptotic formula
for the inner products $\langle f_j,f_k\rangle$: for each $j\ge2$,
\begin{equation}\label{E:asymp}
\langle f_j,f_k\rangle \sim \Bigl(\frac{j-1}{j}\Bigr)\Bigl(\frac{\log k}{2k}\Bigr) \quad(k\to\infty).
\end{equation}
A derivation of this formula is given at the end of Appendix~B. Feeding
the formula into the definition of $G(f_2,\dots,f_{j-1}|f_j,f_k)$,
we deduce that, for each $j\ge2$,
\[
\lim_{k\to\infty}\frac{G(f_2,\dots,f_{j-1}|f_j,f_k)}{(\log k)/2k}=H(j),
\]
where
\[
H(j):=
\begin{vmatrix}
\langle f_2,f_2\rangle &\langle f_2,f_3\rangle &\dots &\langle f_2,f_{j-1}\rangle &1/2\\
\langle f_3,f_2\rangle &\langle f_3,f_3\rangle &\dots &\langle f_3,f_{j-1}\rangle &2/3\\
\vdots &\vdots&\vdots &\vdots &\vdots\\
\langle f_j,f_2\rangle &\langle f_j,f_3\rangle &\dots &\langle f_j,f_{j-1}\rangle &(j-1)/j
\end{vmatrix}.
\]
In combination with Proposition~\ref{P:poscriteria}, this yields the following theorem.

\begin{theorem}\label{T:referee}
Let $j\ge2$. If $H(j)>0$, then there exists $k_0(j)$ such that $\langle e_j,f_k\rangle>0$ for all $k\ge k_0(j)$.
\end{theorem}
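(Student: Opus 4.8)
The plan is to combine the asymptotic formula \eqref{E:asymp} with the equivalence between parts (1) and (2) of Proposition~\ref{P:poscriteria}. The key observation is that $G(f_2,\dots,f_{j-1}|f_j,f_k)$ is, for fixed $j$, a linear function of the last column of its defining matrix, namely of the vector $(\langle f_2,f_k\rangle,\dots,\langle f_j,f_k\rangle)^t$. By \eqref{E:asymp}, as $k\to\infty$ this vector is asymptotic to $\frac{\log k}{2k}(1/2, 2/3, \dots, (j-1)/j)^t$. Since the determinant depends continuously (indeed linearly) on this column, we may pull the scalar $\frac{\log k}{2k}$ out and pass to the limit, obtaining
\[
\frac{G(f_2,\dots,f_{j-1}|f_j,f_k)}{(\log k)/(2k)}\longrightarrow H(j)\qquad(k\to\infty),
\]
exactly as asserted in the paragraph preceding the theorem. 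This is really just multilinearity of the determinant together with the fact that the first $j-1$ columns do not involve $k$ at all.

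First I would record carefully that $G(f_2,\dots,f_{j-1}|f_j,f_k)$ is obtained from the matrix by replacing its last column with the $k$-dependent vector $v_k := (\langle f_2,f_k\rangle,\dots,\langle f_{j-1},f_k\rangle,\langle f_j,f_k\rangle)^t$; note that $\langle f_j, f_k\rangle = \langle f_j,f_k\rangle$ appears in the bottom-right slot, consistent with the definition of $G(f_2,\dots,f_i|g,h)$ with $i=j-1$, $g=f_j$, $h=f_k$. By \eqref{E:asymp} applied to each of the indices $2,3,\dots,j$, we have $v_k = \frac{\log k}{2k}\,w + o\!\left(\frac{\log k}{2k}\right)$ componentwise, where $w = (1/2, 2/3, \dots, (j-1)/j)^t$. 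Expanding the determinant along the last column and using that the cofactors are fixed constants (not depending on $k$), we get $G(f_2,\dots,f_{j-1}|f_j,f_k) = \frac{\log k}{2k} H(j) + o\!\left(\frac{\log k}{2k}\right)$, where $H(j)$ is precisely the determinant obtained by substituting $w$ for the last column.

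Then the conclusion is immediate: if $H(j)>0$, then for all sufficiently large $k$, say $k\ge k_0(j)$, the quantity $G(f_2,\dots,f_{j-1}|f_j,f_k)$ has the same sign as $H(j)$, hence is strictly positive. By the equivalence of (1) and (2) in Proposition~\ref{P:poscriteria} (taking $f = f_k$), this gives $\langle e_j,f_k\rangle>0$ for all $k\ge k_0(j)$, which is the claim.

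I do not expect any serious obstacle here, as the proof is short once \eqref{E:asymp} is in hand — the content is entirely in the asymptotic formula itself, whose derivation the paper defers to Appendix~B. The only point requiring a modest amount of care is making the passage to the limit rigorous: one should check that the $o(\cdot)$ terms in \eqref{E:asymp} really do combine, after multiplication by the bounded cofactors and division by $(\log k)/(2k)$, into a quantity tending to $0$. Since the cofactors are fixed finite numbers and there are only finitely many ($j-1$) of them, this is routine. A secondary subtlety is that the theorem gives no effective bound on $k_0(j)$, nor any guarantee that $H(j)>0$ — indeed verifying $H(j)>0$ for all $j$ would itself be a substantial task, so the theorem is genuinely conditional; this should be stated plainly rather than glossed over.
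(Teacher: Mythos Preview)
Your proposal is correct and follows exactly the route the paper takes: it uses the asymptotic formula \eqref{E:asymp} together with multilinearity of the determinant to obtain $\lim_{k\to\infty} G(f_2,\dots,f_{j-1}|f_j,f_k)\big/\bigl((\log k)/2k\bigr)=H(j)$, and then invokes the equivalence (1)$\Leftrightarrow$(2) of Proposition~\ref{P:poscriteria}. In fact the paper treats the theorem as immediate from this limit and the proposition, so your write-up is, if anything, more detailed than the paper's own.
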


Our calculations show that $H(j)>0$ for all $j$ with $2\le j\le 100$.

According to the referee, it is possible to use a refinement of \eqref{E:asymp}
to obtain upper bounds for $k_0(j)$ for small $j$. 
Of course, once $k_0(j)$ is known for a given value of $j$,
then in principle one can verify directly that $\langle e_j,f_k\rangle>0$ for those $k$ between
$j$ and $k_0(j)$, and thereby establish that $\langle e_j,f_k\rangle>0$ for all $k\ge j$.

\section{Conclusion.}

Where does all this leave us? Is the conjecture true or not?
Does it imply the Riemann hypothesis?
Is it a consequence of the Riemann hypothesis?
Alas, we have been unable to answer any of these questions!
We offer them as a challenge to readers of this \textsc{Monthly}.


\section*{APPENDIX A: LINEAR INDEPENDENCE OF THE $\lowercase{f_k}$.}\label{S:lindep}

The methods for computing $d_n$ described in Section~\ref{S:computation}
take for granted the fact that the functions $(f_k)$ are linearly independent. 
We shall prove this linear independence by constructing a
biorthogonal sequence $(g_l)_{l\ge2}$, namely a sequence in $L^2(0,1)$
such that $\langle f_k,g_l\rangle=\delta_{kl}$ (where, as usual, $\delta_{kl}:=1$
if $k=l$ and $\delta_{kl}:=0$ otherwise). This construction is due to Vasyunin \cite{Va96}.

A key tool in the construction is the M\"obius function $\mu$. 
Recall that $\mu(n):=(-1)^r$ if $n$ is a product of $r$ distinct
prime numbers for some $r\ge0$, and $\mu(n):=0$ otherwise. 
The M\"obius function has the property  that
\begin{equation}\label{E:Mobius}
\sum_{j|n}\mu(j)=\delta_{1n}.
\end{equation}
For  background on the M\"obius function, we refer to \cite[Chapter~2]{Ap76}.

\begin{theorem}[Vasyunin \cite{Va96}]\label{T:biorthog}
For $l\ge2$, define $g_l\in L^2(0,1)$ by
\[
g_l:=\sum_{j|l}\mu(l/j)(h_{j-1}-h_{j}),
\]
where $h_j:=j(j+1)1_{(\frac{1}{j+1},\frac{1}{j}]}$ for $j\ge1$ and  $h_0:=0$.
Then
\[
\langle f_k,g_l\rangle=\delta_{kl} \quad(k,l\ge2).
\]
\end{theorem}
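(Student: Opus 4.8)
The plan is to compute $\langle f_k, g_l\rangle$ directly from the definition of $g_l$, reducing everything to the single building-block quantity $\langle f_k, h_j\rangle$. Since $h_j = j(j+1)\mathbf{1}_{(\frac{1}{j+1},\frac{1}{j}]}$ and $f_k$ is constant on $(\frac{1}{j+1},\frac{1}{j}]$ with value $\{j/k\}$ (as noted in the introduction), we get immediately
\[
\langle f_k, h_j\rangle = j(j+1)\{j/k\}\cdot\bigl(\tfrac{1}{j}-\tfrac{1}{j+1}\bigr) = \{j/k\}\qquad(j\ge1),
\]
and this also holds for $j=0$ with the convention $h_0=0$, $\{0/k\}=0$. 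So the first step is this one-line computation.

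Next I would substitute into the formula for $g_l$:
\[
\langle f_k, g_l\rangle = \sum_{j\mid l}\mu(l/j)\bigl(\{(j-1)/k\} - \{j/k\}\bigr).
\]
To handle this, reindex by writing $j = l/d$ as $d$ runs over divisors of $l$, or better, telescope cleverly. The key identity to exploit is that $\{(j-1)/k\} - \{j/k\}$ equals $-1/k$ unless $k\mid j$, in which case it equals $-1/k + 1 = (k-1)/k$. Equivalently, $\{(j-1)/k\} - \{j/k\} = -1/k + \mathbf{1}_{k\mid j}$. Therefore
\[
\langle f_k, g_l\rangle = \sum_{j\mid l}\mu(l/j)\Bigl(-\tfrac{1}{k} + \mathbf{1}_{k\mid j}\Bigr)
= -\frac{1}{k}\sum_{j\mid l}\mu(l/j) + \sum_{\substack{j\mid l\\ k\mid j}}\mu(l/j).
\]
The first sum is $\sum_{d\mid l}\mu(d) = \delta_{1l}$, which vanishes since $l\ge2$. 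For the second sum, the condition $k\mid j$ and $j\mid l$ forces $k\mid l$; if $k\nmid l$ the sum is empty and we get $0$. If $k\mid l$, then $j$ ranges over multiples of $k$ dividing $l$, i.e. $j = km$ with $m\mid (l/k)$, so $l/j = (l/k)/m$ and the sum becomes $\sum_{m\mid (l/k)}\mu\bigl((l/k)/m\bigr) = \sum_{e\mid (l/k)}\mu(e) = \delta_{1,\,l/k}$, which is $1$ exactly when $l = k$ and $0$ otherwise. Combining the cases, $\langle f_k, g_l\rangle = \delta_{kl}$, as claimed.

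I do not expect any genuine obstacle here; the proof is essentially a bookkeeping exercise. The only point requiring minor care is the treatment of the $j=1$ term (where $h_0 = 0$ enters) — but this is consistent with the formula $\langle f_k,h_0\rangle = 0 = \{0/k\}$, so the telescoping identity $\{(j-1)/k\}-\{j/k\} = -1/k + \mathbf{1}_{k\mid j}$ remains valid at $j=1$ since $k\ge2$ means $k\nmid 1$. A secondary point is making sure the two applications of the Möbius identity \eqref{E:Mobius} are invoked with the correct arguments ($l$ in the first, $l/k$ in the second); writing out the change of variables $j\mapsto j/k$ explicitly will keep this transparent.
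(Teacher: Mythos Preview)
Your proposal is correct and follows essentially the same route as the paper: compute $\langle f_k,h_j\rangle=\{j/k\}$, substitute into the definition of $g_l$, use the identity $\{(j-1)/k\}-\{j/k\}=-1/k+\mathbf{1}_{k\mid j}$, and then apply the M\"obius identity twice. Your extra remarks on the $j=1$ boundary case and the explicit reindexing $j=km$ are sound and match the paper's treatment.
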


If $\sum_{k=2}^n\lambda_k f_k=0$, then,
taking the inner product with $g_l$ and using Theorem~\ref{T:biorthog}, we deduce that
$\lambda_l=0$ for each $l$.
Thus we obtain the following corollary.

\begin{corollary}
The functions $(f_k)_{k\ge2}$ are linearly independent.
\end{corollary}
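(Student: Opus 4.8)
The plan is to deduce the corollary directly from the biorthogonality relation furnished by Theorem~\ref{T:biorthog}, which I am entitled to assume. Recall that linear independence of an infinite family means precisely that every finite subfamily is linearly independent; equivalently, any finite linear combination of the $f_k$ that vanishes in $L^2(0,1)$ must have all coefficients equal to zero. So I would begin by fixing an arbitrary finite relation: suppose scalars $\lambda_2,\dots,\lambda_n$ satisfy $\sum_{k=2}^n \lambda_k f_k = 0$ as an element of $L^2(0,1)$.

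The idea is then to isolate each coefficient individually by pairing this relation against the dual functions $g_l$. Fix an index $l$ with $2\le l\le n$ and take the inner product of both sides with $g_l\in L^2(0,1)$. Since the inner product is linear in its first argument, the left-hand side becomes $\sum_{k=2}^n \lambda_k \langle f_k, g_l\rangle$, while the right-hand side is $\langle 0, g_l\rangle = 0$. Now I would invoke Theorem~\ref{T:biorthog}, namely $\langle f_k, g_l\rangle = \delta_{kl}$ for all $k,l\ge2$: every term in the sum drops out except the one with $k=l$, which leaves exactly $\lambda_l = 0$. Because $l$ was an arbitrary index in $\{2,\dots,n\}$, all the coefficients vanish, and this is precisely the linear independence we want.

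I do not expect any genuine obstacle at this stage. The argument is the standard one-line consequence of possessing a biorthogonal system, and its only input is the duality relation $\langle f_k,g_l\rangle=\delta_{kl}$. All the real difficulty lies upstream, in establishing Theorem~\ref{T:biorthog}, that is, in verifying that Vasyunin's explicitly-defined functions $g_l$ are indeed biorthogonal to the $f_k$; that verification is where the M\"obius inversion identity \eqref{E:Mobius} does the essential work. Granting that theorem, the corollary follows immediately from the computation just described.
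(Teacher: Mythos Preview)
Your argument is correct and is exactly the one the paper gives: assume a finite relation $\sum_{k=2}^n \lambda_k f_k=0$, pair with each $g_l$, and use Theorem~\ref{T:biorthog} to conclude $\lambda_l=0$.
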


Actually, more is true. Essentially the same argument shows that the sequence $(f_k)_{k\ge2}$ is 
\emph{minimal}, meaning that each $f_k$ is a positive distance  from the span of all the others.

\begin{proof}[Proof of Theorem~\ref{T:biorthog}]
We recall from Section~\ref{S:intro} that $f_k(x)=\{j/k\}$ on $(\frac{1}{j+1},\frac{1}{j}]$.
Hence
\[
\langle f_k,h_j\rangle=\int_{1/j+1}^{1/j}\Bigl\{\frac{j}{k}\Bigr\}j(j+1)\,dx=\Bigl\{\frac{j}{k}\Bigr\}.
\]
It follows that
\[
\langle f_k,g_l\rangle =\sum_{j|l}\mu(l/j)\Bigl(\Bigl\{\frac{j-1}{k}\Bigr\}-\Bigl\{\frac{j}{k}\Bigr\}\Bigr).
\]
Now
\[
\Bigl\{\frac{j-1}{k}\Bigr\}-\Bigl\{\frac{j}{k}\Bigr\}=
\begin{cases}
1-1/k, &\text{if $k$ divides $j$,}\\
\hfill -1/k, &\text{otherwise.}\\
\end{cases}
\]
Therefore,
\[
\langle f_k,g_l\rangle =-\frac{1}{k}\sum_{j|l}\mu(l/j)+\sum_{\substack{j|l\\ j\in k\NN}}\mu(l/j).
\]
Since $l\ge2$, the first of these sums vanishes by \eqref{E:Mobius}.
As for the second sum, it contains no terms unless $k$ divides $l$.
If $k$ divides $l$, then, writing $l=kl'$ and $j=kj'$, we have
\[
\sum_{\substack{j|l\\ j\in k\NN}}\mu(l/j)=\sum_{j'|l'}\mu(l'/j')=\delta_{1l'}=\delta_{kl}.
\]
This completes the proof of the theorem.
\end{proof}


\section*{APPENDIX B: FORMULAS FOR ${\langle\lowercase{ f_j,f_k}\rangle}$.}\label{S:ipformula}

To apply the  methods described in Section~\ref{S:computation},
we need to compute the inner products $\langle f_j,f_k\rangle$.
In this section we derive some formulas for these inner products.

As we have already remarked, the functions $f_k$ are all constant on intervals of the form
$(\frac{1}{r+1},\frac{1}{r}]$, where $r$ is a positive integer.
Thus $\langle f_j,f_k\rangle$ can be expressed as the sum of an infinite series.
But actually more is true. Looking again at the graph of $f_5$ in Figure~\ref{F:f5}, 
we see that it ``repeats itself.''
The same is true of every $f_k$. This periodicity property can be exploited to re-express $\langle f_j,f_k\rangle$
as the sum of a \emph{finite} series.
This idea seems to have been first noticed by Vasyunin \cite{Va96}, 
who used it to give several formulas
for $\langle f_j,f_k\rangle$. 
We shall derive one such formula, 
from which the others can  be deduced.

\begin{theorem}[Vasyunin \cite{Va96}]\label{T:ipformula}
Let $j,k\ge2$, let $m$ be a common multiple of $j,k$, and let $\omega:=\exp(2\pi i/m)$. Then
\begin{equation}\label{E:ipformula}
\langle f_j,f_k\rangle=
\frac{1}{m}\sum_{q=1}^{m-1}\sum_{r=1}^{m-1}
\Bigl\{\frac{q}{j}\Bigr\}\Bigl\{\frac{q}{k}\Bigr\}\omega^{-qr}(\omega^{-r}-1)\log(1-\omega^r).
\end{equation}
\end{theorem}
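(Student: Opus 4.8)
The plan is to exploit the piecewise-constant, eventually periodic structure of $f_j$ and $f_k$, and encode the values via roots of unity. First I would record that $\langle f_j,f_k\rangle = \sum_{r=1}^{\infty} a_r(j,k)$, where $a_r(j,k)$ is the value of $f_j(x)f_k(x)$ on the interval $(\frac{1}{r+1},\frac{1}{r}]$ times its length $\frac{1}{r(r+1)}$. By the computation already done in Appendix~A (namely $f_k(x)=\{r/k\}$ on that interval), we have $a_r(j,k)=\{r/j\}\{r/k\}\cdot\frac{1}{r(r+1)}$. Since $m$ is a common multiple of $j$ and $k$, the quantity $\{r/j\}\{r/k\}$ depends only on $r \bmod m$; this is the periodicity visible in Figure~\ref{F:f5}. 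So I would group the series according to the residue class $q := r \bmod m$, writing $r = q + m\ell$ with $q$ running over $0,1,\dots,m-1$ (note $q=0$ contributes nothing since $\{0\}=0$) and $\ell \ge 0$, giving
\[
\langle f_j,f_k\rangle = \sum_{q=1}^{m-1}\Bigl\{\frac{q}{j}\Bigr\}\Bigl\{\frac{q}{k}\Bigr\}\sum_{\ell\ge 0}\frac{1}{(q+m\ell)(q+m\ell+1)}.
\]

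Next I would evaluate the inner sum $S(q):=\sum_{\ell\ge0}\frac{1}{(q+m\ell)(q+m\ell+1)}$ in closed form. The natural device is telescoping combined with the discrete Fourier transform over $\ZZ/m\ZZ$: using $\frac{1}{N(N+1)}=\frac{1}{N}-\frac{1}{N+1}$ and then the identity $\frac{1}{m}\sum_{r=0}^{m-1}\omega^{-qr}\cdot\frac{-\log(1-\omega^r)}{1}$-type generating-function manipulations, I would express $S(q)$ against the basic series $\sum_{n\ge1}\frac{x^n}{n}=-\log(1-x)$ evaluated at $x=\omega^r$. Concretely, for $r\not\equiv0$, the orthogonality relation $\frac{1}{m}\sum_{r=0}^{m-1}\omega^{r(n-q)}=\mathbf 1_{n\equiv q}$ lets me write $\sum_{n\equiv q}\frac{1}{n}$ (suitably regularized, since this diverges) and $\sum_{n\equiv q}\frac{1}{n+1}$ as combinations of $-\log(1-\omega^r)$; the divergent $r=0$ pieces cancel when one forms the telescoped difference $S(q)$, leaving a finite sum over $r=1,\dots,m-1$. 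Carrying this out produces $S(q)=\frac{1}{m}\sum_{r=1}^{m-1}\omega^{-qr}(\omega^{-r}-1)\log(1-\omega^r)$, and substituting back gives exactly \eqref{E:ipformula}. A cleaner variant: write $S(q)$ as a double sum $\sum_{n\ge 1}\frac{b_n}{n}$ where $b_n = \mathbf 1_{n\equiv q}-\mathbf 1_{n\equiv q+1}\pmod m$, expand $b_n=\frac1m\sum_{r=0}^{m-1}(\omega^{-qr}-\omega^{-(q+1)r})\omega^{rn}=\frac1m\sum_{r=0}^{m-1}\omega^{-qr}(1-\omega^{-r})\omega^{rn}$, and then sum over $n$; the $r=0$ term vanishes because $1-\omega^{0}=0$, so no regularization is needed, and $\sum_{n\ge1}\omega^{rn}/n=-\log(1-\omega^r)$ converges for $r\ne0$ since $|\omega^r|=1$ and $\omega^r\ne1$. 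This is the approach I would actually write up.

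The main obstacle is the interchange of summation: I am rearranging a doubly-indexed series and swapping the order of $\sum_q$, $\sum_\ell$ (or $\sum_n$, $\sum_r$), and I need to justify this carefully because the series $\sum_{n\ge1}\omega^{rn}/n$ converges only conditionally (it is the boundary value of $-\log(1-z)$). I would handle this either by an Abel-summation / dominated-convergence argument — introducing a factor $x^n$ with $x\uparrow 1$, doing all manipulations for $x<1$ where everything converges absolutely, and then taking the limit using the fact that $\sum b_n x^n/n \to \sum b_n/n$ by Abel's theorem since the latter converges — or by noting that the partial sums of $b_n$ are bounded (the sequence $b_n$ is $m$-periodic with mean zero) so Dirichlet's test applies and the rearrangement is legitimate. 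Apart from this analytic bookkeeping, the remaining steps are the routine algebra of collecting terms and recognizing the resulting finite sum as the right-hand side of \eqref{E:ipformula}, using $\omega^{-(q+1)r}=\omega^{-qr}\omega^{-r}$ to factor out $(\omega^{-r}-1)$.
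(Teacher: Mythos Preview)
Your proposal is correct and follows essentially the same route as the paper: write $\langle f_j,f_k\rangle=\sum_{r\ge1}\{r/j\}\{r/k\}\frac{1}{r(r+1)}$, group by residues $q\bmod m$, and evaluate the arithmetic-progression sum $\sum_{r\equiv q}\frac{1}{r(r+1)}$ via roots of unity and the logarithm series. The only cosmetic difference is that the paper uses the generating function $\sum_{n\ge1}\frac{z^n}{n(n+1)}=1+\frac{(1-z)\log(1-z)}{z}$ directly at $z=\omega^l$ and then applies orthogonality, whereas you first telescope with $\frac{1}{n(n+1)}=\frac{1}{n}-\frac{1}{n+1}$ and use $\sum_{n\ge1}\frac{z^n}{n}=-\log(1-z)$; since the inner sum over $r$ is finite, your interchange worry is in fact harmless (linearity of limits), and the boundary convergence at $|z|=1$, $z\ne1$ is handled exactly as you say by Abel's theorem or Dirichlet's test.
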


\begin{proof}
For each positive integer $r$ and each $x\in(\frac{1}{r+1},\frac{1}{r}]$, we have
$f_j(x)=\{r/j\}$ and $f_k(x)=\{r/k\}$. Therefore,
\[
\int_0^1 f_j(x)f_k(x)\,dx
=\sum_{r=1}^\infty\int_{1/(r+1)}^{1/r} f_j(x)f_k(x)\,dx
=\sum_{r=1}^\infty\Bigl\{\frac{r}{j}\Bigr\}\Bigl\{\frac{r}{k}\Bigr\}\frac{1}{r(r+1)}.
\]
As $m$ is a common multiple of $j$ and $k$, 
the functions $r\mapsto \{r/j\}$ and $r\mapsto\{r/k\}$ are $m$-periodic
and vanish at multiples of $m$. Therefore,
\begin{equation}\label{E:midway}
\sum_{r=1}^\infty\Bigl\{\frac{r}{j}\Bigr\}\Bigl\{\frac{r}{k}\Bigr\}\frac{1}{r(r+1)}
=\sum_{q=1}^{m-1} \Bigl\{\frac{q}{j}\Bigr\}\Bigl\{\frac{q}{k}\Bigr\}\Bigl( \sum_{\substack{r=1\\ r\equiv q\text{\,mod\,}m}}^\infty\frac{1}{r(r+1)}\Bigr).
\end{equation}
We seek to express the last series (in brackets) in finite terms.
To this end, we note that
\[
\sum_{r=1}^\infty \frac{z^r}{r(r+1)}=1+\frac{(1-z)\log(1-z)}{z} \quad(|z|<1).
\]
In fact, the equality holds even if $|z|=1$, provided 
that one interprets the product $(1-z)\log(1-z)$ as being $0$ when $z=1$.
In particular, the equality holds for $z=\omega^l$, where $\omega:=\exp(2\pi i/m)$. Therefore,
\[
\sum_{r=1}^\infty \frac{\omega^{rl}}{r(r+1)}=1+\frac{(1-\omega^l)\log(1-\omega^l)}{\omega^l}.
\]
Multiplying  both sides by $\omega^{-ql}$ and summing from $l=1$ to $l=m$, 
we get
\[
\sum_{l=1}^m\sum_{r=1}^\infty\frac{\omega^{(r-q)l}}{r(r+1)}
=\sum_{l=1}^m\omega^{-ql}\Bigl(1+\frac{(1-\omega^l)\log(1-\omega^l)}{\omega^l}\Bigr).
\]
Now, as is well known,
\[
\frac{1}{m}\sum_{l=1}^m\omega^{(r-q)l}=
\begin{cases} 1, &\text{if~}m|(r-q),\\ 
0, &\text{otherwise}.\end{cases}
\]
It follows that
\begin{align*}
\sum_{\substack{r= 1\\r\equiv q\text{\,mod\,}m}}^\infty\frac{1}{r(r+1)}
&=\frac{1}{m}\sum_{l=1}^m\omega^{-ql}\Bigl(1+\frac{(1-\omega^l)\log(1-\omega^l)}{\omega^l}\Bigr)\\
&=\delta_{\{m|q\}}+\frac{1}{m}\sum_{l=1}^{m-1}\omega^{-ql}(\omega^{-l}-1)\log(1-\omega^l),
\end{align*}
where $\delta_{\{m|q\}}$ is $1$ if $q$ is divisible by $m$, and  is $0$ otherwise.
Substituting this back into \eqref{E:midway}, 
the $\delta_{\{m|q\}}$ term disappears, and we obtain \eqref{E:ipformula}.
\end{proof}

This is not the end of the story. The double sum in  \eqref{E:ipformula} can be developed still further,
into a single sum involving only real functions. We content ourselves to state the end result, 
referring to Vasyunin's paper  \cite{Va96} for the details, which are elementary but quite long. 
Let $d$ be the greatest common divisor of $j,k$, and write
$j=dj_0$ and $k=dk_0$. 
Thus $j_0,k_0$ are coprime, so there exist integers $a,b$ such that $aj_0+bk_0=1$. Then,
according to Vasyunin's formula:
\begin{equation}\label{E:Vasyunin}
\begin{aligned}
jk\langle f_j,f_k&\rangle
=\Bigl(\frac{k-1}{2}\Bigr)\log j+\Bigl(\frac{j-1}{2}\Bigr)\log k\\
&-\frac{\pi}{2}\sum_{r=1}^{j-1}\Bigl(\frac{1}{2}-\frac{r}{j}\Bigr)\cot\Bigl(\frac{\pi r}{j}\Bigr)
-\frac{\pi}{2}\sum_{r=1}^{k-1}\Bigl(\frac{1}{2}-\frac{r}{k}\Bigr)\cot\Bigl(\frac{\pi r}{k}\Bigr)\\
&+\frac{\pi d}{2}\sum_{r=1}^{j_0-1}\Bigl(\frac{1}{2}-\frac{r}{j_0}\Bigr)\cot\Bigl(\frac{\pi rb}{j_0}\Bigr)
+\frac{\pi d}{2}\sum_{r=1}^{k_0-1}\Bigl(\frac{1}{2}-\frac{r}{k_0}\Bigr)\cot\Bigl(\frac{\pi ra}{k_0}\Bigr).
\end{aligned}
\end{equation}
In fact, this is the formula that we used in the computations leading to \eqref{E:50K}.

Cotangent sums of the type above were recently 
studied by Bettin and Conrey \cite{BC13}. They showed that these sums exhibit
a type of reciprocity property. We refer to their paper  for the details.

Next, we derive another formula for $\langle f_j,f_k\rangle$,
which, though less useful from the point of view of computation,
exhibits a certain structure that may eventually help in proving Conjecture~\ref{Conj:det}.
It also brings out the relationship between the inner products $\langle f_j,f_k\rangle$ 
and the Riemann zeta function explicitly.
The formula is closely related to Lemma~\ref{L:Mellin}, and was very likely known to Nyman and Beurling.

\begin{theorem}
For $j,k\ge2$, we have
\begin{equation}\label{E:fjfkzeta}
\langle f_j, f_k\rangle=
\frac{1}{2\pi jk}\int_{-\infty}^\infty \bigl(j^{\frac{1}{2}-it}-1\bigr)\bigl(k^{\frac{1}{2}+it}-1\bigr)\frac{\bigl|\zeta(\frac{1}{2}+it)\bigr|^2}{\frac{1}{4}+t^2}\,dt.
\end{equation}
\end{theorem}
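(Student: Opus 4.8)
The plan is to use the Mellin transform identity of Lemma~\ref{L:Mellin} together with the Parseval (Plancherel) formula for the Mellin transform on the critical line. First I would recall that Lemma~\ref{L:Mellin} gives, for $\Re s>0$ and $s\neq1$,
\[
\widehat{f_k}(s):=\int_0^1 f_k(x)x^{s-1}\,dx=\frac{k^{-1}-k^{-s}}{s}\,\zeta(s).
\]
Since $f_k$ is bounded and supported in $(0,1]$, it belongs to $L^2(0,1)$, and the change of variables $x=e^{-u}$ turns $f_k(x)x^{s-1}\,dx$ (with $s=\tfrac12+it$) into the Fourier transform of the $L^2(\mathbb{R})$ function $u\mapsto e^{-u/2}f_k(e^{-u})$. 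Concretely, $\widehat{f_k}(\tfrac12+it)=\int_{-\infty}^\infty e^{-u/2}f_k(e^{-u})e^{-itu}\,du$ is (up to normalization) the Fourier transform of an $L^2$ function of $u$. The key step is then the Mellin–Plancherel identity
\[
\langle f_j,f_k\rangle=\int_0^1 f_j(x)f_k(x)\,dx
=\frac{1}{2\pi}\int_{-\infty}^\infty \widehat{f_j}\bigl(\tfrac12+it\bigr)\,\overline{\widehat{f_k}\bigl(\tfrac12+it\bigr)}\,dt,
\]
which is just ordinary Parseval applied to the two $L^2(\mathbb{R})$ functions above (the real functions $f_j,f_k$ make the conjugate on the second factor harmless, but it is cleanest to keep it).

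Next I would substitute the closed form of $\widehat{f_j}$ and $\widehat{f_k}$. Writing $s=\tfrac12+it$, so that $\bar s=\tfrac12-it$ and $|s|^2=\tfrac14+t^2$, we have
\[
\widehat{f_j}\bigl(\tfrac12+it\bigr)=\frac{j^{-1}-j^{-1/2-it}}{\tfrac12+it}\,\zeta\bigl(\tfrac12+it\bigr),
\qquad
\overline{\widehat{f_k}\bigl(\tfrac12+it\bigr)}=\frac{k^{-1}-k^{-1/2+it}}{\tfrac12-it}\,\overline{\zeta\bigl(\tfrac12+it\bigr)}.
\]
Multiplying the two zeta factors gives $|\zeta(\tfrac12+it)|^2$, and multiplying the two denominators gives $(\tfrac12+it)(\tfrac12-it)=\tfrac14+t^2$. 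For the numerators, factor out $j^{-1}$ from the first and $k^{-1}$ from the second: $j^{-1}-j^{-1/2-it}=j^{-1}\bigl(1-j^{1/2-it}\bigr)$ and $k^{-1}-k^{-1/2+it}=k^{-1}\bigl(1-k^{1/2+it}\bigr)$. Collecting the factor $j^{-1}k^{-1}$ and the $\tfrac{1}{2\pi}$ from Parseval yields exactly
\[
\langle f_j,f_k\rangle=\frac{1}{2\pi jk}\int_{-\infty}^\infty\bigl(1-j^{1/2-it}\bigr)\bigl(1-k^{1/2+it}\bigr)\frac{|\zeta(\tfrac12+it)|^2}{\tfrac14+t^2}\,dt,
\]
which is \eqref{E:fjfkzeta} up to the harmless sign $(1-j^{1/2-it})(1-k^{1/2+it})=(j^{1/2-it}-1)(k^{1/2+it}-1)$.

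The main obstacle is justifying the Mellin–Plancherel step rigorously: one must check that $u\mapsto e^{-u/2}f_k(e^{-u})$ is genuinely in $L^2(\mathbb{R})$ (easy, since $f_k$ is bounded and vanishes for $x>1$, so the function vanishes for $u<0$ and decays like $e^{-u/2}$ for $u\to+\infty$), and that the Fourier transform appearing is the boundary value of the holomorphic Mellin transform from Lemma~\ref{L:Mellin}. Since the right-hand side of \eqref{E:Mellin} is holomorphic in $\Re s>0$ and the left-hand side is the Fourier transform of the $L^2$ function above evaluated on the line $\Re s=\tfrac12$, standard facts about the Mellin transform as an isometry from $L^2((0,1],x^{-1}\,dx)$-type spaces — or, equivalently, the Fourier–Plancherel theorem after the substitution $x=e^{-u}$ — close the gap. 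One should also note in passing that the integral in \eqref{E:fjfkzeta} converges absolutely, which follows because $|\zeta(\tfrac12+it)|^2/(\tfrac14+t^2)$ is integrable on $\mathbb{R}$ (the classical bound $\zeta(\tfrac12+it)=O(|t|^{1/6+\varepsilon})$, or even the cruder $O(|t|^{1/2})$, makes the integrand $O(|t|^{-1}\log^{?}|t|\cdot|t|^{1/3})$... more simply, absolute convergence is automatic from the Parseval identity itself, since the left side is a finite inner product). I would present the argument in the order: (i) reduce to Fourier–Plancherel via $x=e^{-u}$; (ii) identify $\widehat{f_k}(\tfrac12+it)$ using Lemma~\ref{L:Mellin} and analytic continuation; (iii) apply Parseval; (iv) simplify the algebra to reach \eqref{E:fjfkzeta}.
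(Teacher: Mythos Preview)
Your proposal is correct and follows essentially the same route as the paper: define $h_k(u)=e^{-u/2}f_k(e^{-u})$, observe that $h_k\in L^1(\mathbb{R})\cap L^2(\mathbb{R})$ with Fourier transform given by Lemma~\ref{L:Mellin} at $s=\tfrac12+it$, apply Plancherel, and simplify. The only cosmetic difference is that the paper states the $L^1$ membership explicitly (which makes the identification of the Fourier transform with the Mellin integral immediate), whereas you frame the same step as taking boundary values of the holomorphic Mellin transform.
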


\begin{proof}
For each $k\ge2,$ define $h_k:\RR\to\RR$ by
\[
h_k(t):=
\begin{cases}
f_k(e^{-t})e^{-t/2}, &t\ge0,\\
0, &t<0.\\
\end{cases}
\]
As $f_k$ is a bounded function, it follows that $h_k\in L^1(\RR)\cap L^2(\RR)$. 
The Fourier transform of $h_k$ can be computed as
\[
\widehat{h}_k(\omega)
=\int_{-\infty}^\infty h_k(t)e^{-i\omega t}\,dt
=\int_0^1 f_k(x)x^{-\frac{1}{2}+i\omega}\,dx
=\frac{k^{-1}-k^{-\frac{1}{2}-i\omega}}{\frac{1}{2}+i\omega}\zeta\Bigl(\frac{1}{2}+i\omega\Bigr),
\]
the last equality coming from \eqref{E:Mellin}. 
By Plancherel's theorem, we have
\[
\int_{-\infty}^\infty h_j(t)\overline{h_k(t)}\,dt
=\frac{1}{2\pi}\int_{-\infty}^\infty \widehat{h}_j(\omega)\overline{\widehat{h}_k(\omega)}\,d\omega.
\]
After the change of variable $x=e^{-t}$, the left-hand side is just $\langle f_j,f_k\rangle$. 
As for the right-hand side, it is equal to 
\[
\frac{1}{2\pi}\int_{-\infty}^\infty \frac{(j^{-1}-j^{-\frac{1}{2}-i\omega})(k^{-1}-k^{-\frac{1}{2}+i\omega})}{(\frac{1}{2}+i\omega)(\frac{1}{2}-i\omega)}\zeta\Bigl(\frac{1}{2}+i\omega\Bigr)\overline{\zeta\Bigl(\frac{1}{2}+i\omega\Bigr)}\,d\omega,
\]
which leads to \eqref{E:fjfkzeta}.
\end{proof}

Finally, we derive the asymptotic formula \eqref{E:asymp} for $\langle f_j,f_k\rangle$.
It is based on  properties of the function
\[
A(\lambda):=\int_0^\infty \{t\}\{\lambda t\}\,\frac{dt}{t^2}
\quad(\lambda>0),
\]
which was studied in detail by
B\'aez-Duarte, Balazard, Landreau, and Saias in \cite{BBLS05}.
This function exhibits some remarkable behavior.
For example, even though it is continuous, it has a strict local maximum at each rational number.
The property that we shall need,
established in \cite[Proposition 1]{BBLS05}, is that
\begin{equation}\label{E:asympA}
A(\lambda)\sim\frac{1}{2}\log\lambda \quad(\lambda\to\infty).
\end{equation}

\begin{theorem}\label{T:asymp}
For each $j\ge2$, we have
\[
\langle f_j,f_k\rangle \sim \frac{j-1}{j}\frac{\log k}{2k} \quad(k\to\infty).
\]
\end{theorem}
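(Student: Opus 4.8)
The plan is to unfold the inner product by the substitution $x=1/t$ so that it takes a form close to that of the function $A$. First I would record the elementary identity
\[
f_k\!\left(\frac1t\right)=\frac1k[t]-\left[\frac tk\right]=\left\{\frac tk\right\}-\frac1k\{t\}\qquad(t\ge1),
\]
obtained by writing $[t]=t-\{t\}$ and $[t/k]=t/k-\{t/k\}$. Since the right-hand side also vanishes for $0<t<1$ (there $\{t/k\}=t/k$ and $\{t\}=t$), the change of variable $x=1/t$ in $\langle f_j,f_k\rangle=\int_0^1 f_j(x)f_k(x)\,dx$ gives
\[
\langle f_j,f_k\rangle=\int_0^\infty\left(\left\{\frac tj\right\}-\frac1j\{t\}\right)\left(\left\{\frac tk\right\}-\frac1k\{t\}\right)\frac{dt}{t^2}.
\]

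Next I would expand the product into its four terms. Near $t=0$ each of the four integrands is bounded (each factor $\{t/m\}$ is $O(t)$), and near $t=\infty$ each is $O(t^{-2})$, so all four integrals converge absolutely and the splitting is legitimate. Each piece can be matched to $A$ after a linear substitution: putting $t=ju$ in the first term yields $\int_0^\infty\{t/j\}\{t/k\}\,t^{-2}\,dt=\frac1jA(j/k)$, and the remaining three are $-\frac1kA(1/j)$, $-\frac1jA(1/k)$, and $\frac1{jk}A(1)$. I would also need the scaling relation $A(\lambda)=\lambda A(1/\lambda)$ (substitute $s=\lambda t$ in the definition of $A$); in particular $\frac1jA(j/k)=\frac1kA(k/j)$ and $A(1/k)=\frac1kA(k)$, so that
\[
\langle f_j,f_k\rangle=\frac1kA\!\left(\frac kj\right)-\frac{A(1/j)}{k}-\frac{A(k)}{jk}+\frac{A(1)}{jk}.
\]

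Finally I would feed in the asymptotic \eqref{E:asympA}. With $j$ fixed and $k\to\infty$ we have $A(k/j)\sim\frac12\log(k/j)\sim\frac12\log k$ and $A(k)\sim\frac12\log k$, while $A(1/j)$ and $A(1)$ are constants, so the first and third terms contribute $\frac{\log k}{2k}$ and $-\frac{\log k}{2jk}$ respectively, and the other two are $O(1/k)=o\!\left((\log k)/k\right)$. Adding these gives
\[
\langle f_j,f_k\rangle=\frac{\log k}{2k}-\frac{\log k}{2jk}+o\!\left(\frac{\log k}{k}\right)=\frac{j-1}{j}\cdot\frac{\log k}{2k}+o\!\left(\frac{\log k}{k}\right),
\]
which is the assertion. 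The substantive input is the deep asymptotic \eqref{E:asympA} for $A$, imported from \cite{BBLS05}; granting that, the only points needing care are the bookkeeping with the scaling relation and confirming that the two ``error'' integrals really are $O(1/k)$, both of which are routine. I would not expect any genuine obstacle beyond making sure the absolute convergence that justifies the four-way splitting is stated cleanly.
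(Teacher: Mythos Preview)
Your argument is correct and is essentially the paper's own proof: you rewrite $f_k$ via fractional parts, substitute $t=1/x$, expand into four integrals recognized as values of $A$, and then invoke the asymptotic \eqref{E:asympA}. The only cosmetic difference is that you make the scaling relation $A(\lambda)=\lambda A(1/\lambda)$ explicit, whereas the paper arrives directly at the equivalent identity \eqref{E:fjfkA}.
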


\begin{proof}
Rewriting $f_k$ as 
\[
f_k(x)= -\frac{1}{k}\Bigl\{\frac{1}{x}\Bigr\}+\Bigl\{\frac{1}{kx}\Bigr\}\qquad(0<x\le 1),
\]
and making the substitution $t:=1/x$, we see that
\begin{equation}\label{E:fjfkA}
\langle f_j,f_k\rangle=
\frac{1}{k}A\Bigl(\frac{k}{j}\Bigr)-\frac{1}{jk}A(k)-\frac{1}{jk}A(j)+\frac{1}{jk}A(1).
\end{equation}
The result follows upon combining \eqref{E:fjfkA} and \eqref{E:asympA}.
\end{proof}


\begin{acknowledgment}{Acknowledgments.}
The authors thank Andr\'e Fortin for his valuable advice concerning the computations
leading to \eqref{E:50K}.
They also thank the anonymous referees  for their 
careful reading of the paper and for 
suggestions that greatly improved the paper. 
HB was supported by an NSERC undergraduate student research award.
TR was supported by grants from NSERC and the Canada Research Chairs program.
\end{acknowledgment}



\begin{biog}

\item[Hugues Bellemare] 
is an undergraduate student in Mathematics at Universit\'e Laval. He has always been curious about the Millennium Problems. For now, his research interests are not very well defined, varying from analysis to algebra.
\begin{affil}
D\'epartement de math\'ematiques et de statistique, Universit\'e Laval, Qu\'ebec (QC), Canada G1V 0A6\\
hugues.bellemare.1@ulaval.ca
\end{affil}

\item[Yves Langlois] 
received his B.Sc.\ in Mathematics and  M.Sc.\ in Financial Engineering from  Universit\'e Laval. Since 2007 he has been working as a business intelligence analyst for the Government of Qu\'ebec, as well as studying computer science, travelling, and enjoying life with his family.
\begin{affil}
D\'epartement de math\'ematiques et de statistique, Universit\'e Laval, Qu\'ebec (QC), Canada G1V 0A6\\
yves.langlois.1@ulaval.ca
\end{affil}

\item[Thomas Ransford]  
received his Ph.D.\ from  the University of Cambridge. After spells as a university lecturer at Leeds and at Cambridge, 
he moved to Universit\'e Laval, where he now holds the Canada Research Chair in Spectral Theory and Complex Analysis.

\begin{affil}
D\'epartement de math\'ematiques et de statistique, Universit\'e Laval, Qu\'ebec (QC), Canada G1V 0A6\\
thomas.ransford@mat.ulaval.ca
\end{affil}

\end{biog}

\end{document}